\documentclass{amsart}
\usepackage{a4wide}

\usepackage{textcmds} % amsrefs needs this, but it has to be loaded early to avoid re-defs.
\usepackage{amsmath, amssymb, amsfonts, amstext, amsthm, amscd, mathrsfs, mathscinet}
\usepackage{mathtools}
\usepackage{verbatim}
\usepackage{graphicx}
\usepackage{color}
\usepackage{pinlabel}
\usepackage{mathrsfs} 
\usepackage{xypic}
\usepackage{enumitem}
\usepackage{tikz}
\usetikzlibrary{matrix}

% for parskip + TOC
\usepackage{etoolbox}
\makeatletter
\let\ams@starttoc\@starttoc
\makeatother
\usepackage[parfill]{parskip}
\makeatletter
\let\@starttoc\ams@starttoc
\patchcmd{\@starttoc}{\makeatletter}{\makeatletter\parskip\z@}{}{}
\makeatother

\usepackage{hyperref}

\newcommand{\CC}{\mathbf{C}}

\newcommand{\RR}{\mathbf{R}}

\newcommand{\ZZ}{\mathbf{Z}}
\newcommand{\TT}{\mathbf{T}}
\newcommand{\kk}{\mathbf{k}}

\newcommand{\OP}{\operatorname}

% for symbol |_|
\newsavebox{\textvisiblespacebox}
\savebox{\textvisiblespacebox}{\texttt{aa}}
\newcommand\vartextvisiblespace[1][\wd\textvisiblespacebox]{%
  \makebox[#1]{\kern.1em\rule{.4pt}{.3ex}%
  \hrulefill%
  \rule{.4pt}{.3ex}\kern.1em}%
}
% For ease-of-use

\numberwithin{equation}{section}

\newtheorem{thm}{Theorem}[section]
\newtheorem{lma}[thm]{Lemma}

\newtheorem{cor}[thm]{Corollary}
\newtheorem{mainthm}{Theorem}
\newtheorem{maincor}[mainthm]{Corollary}

\newtheoremstyle{TheoremNum}
    {\topsep}{\topsep}              %%% space between body and thm
    {\itshape}                      %%% Thm body font
    {}                              %%% Indent amount (empty = no indent)
    {\bfseries}                     %%% Thm head font
    {.}                             %%% Punctuation after thm head
    { }                             %%% Space after thm head
    {\thmname{#1}\thmnote{ \bfseries #3}}%%% Thm head spec
\theoremstyle{TheoremNum}

\theoremstyle{definition}
\newtheorem{dfn}[thm]{Definition}

\theoremstyle{remark}
\newtheorem{rmk}[thm]{Remark}

\begingroup 
\makeatletter 
\@for\theoremstyle:=definition,remark,plain,TheoremNum\do{% 
\expandafter\g@addto@macro\csname th@\theoremstyle\endcsname{% 
\addtolength\thm@preskip\parskip 
}% 
} 
\endgroup 

\title[Legendrians with unbounded spectral norm]{Families of Legendrians and Lagrangians with unbounded spectral norm}

\author{Georgios Dimitroglou Rizell}
\address{Department of Mathematics\\
Uppsala University\\
Box 480\\
SE-751 06 UPPSALA\\
SWEDEN}
\email{georgios.dimitroglou@math.uu.se}
\thanks{The author is supported by the grant KAW 2016.0198 from the Knut and Alice Wallenberg Foundation.}

\begin{document}

\begin{abstract}
Viterbo has conjectured that any Lagrangian in the unit co-disc bundle of a torus which is Hamiltonian isotopic to the zero-section satisfies a uniform bound on its spectral norm; a recent result by Shelukhin showed that this is indeed the case. The modest goal of this note is to explore two natural generalisations of the above geometric setting in which the bound of the spectral norm fails: first, passing to Legendrian isotopies in the contactisation of the unit co-disc bundle (Hamiltonian isotopies lift to Legendrian isotopies) and, second, considering Hamiltonian isotopies but after modifying the co-disc bundle by attaching a critical Weinstein one-handle.
\end{abstract}

\maketitle
\setcounter{tocdepth}{1}
\tableofcontents

\section{Introduction and results}

Spectral invariants were introduced in Viterbo's seminal work \cite{Viterbo:Generating}. Since their appearance they have become one of the most fundamental tools of quantitative symplectic topology. We do not intend to give an overview of its development and many applications here; instead we direct the reader to work by Oh \cite{Oh:Spectral} for a thorough introduction to the subject from a modern perspective.

Very briefly, spectral invariants in the symplectic case consists of functions from Hamiltonian diffeomorphisms
$$c \colon Ham(X,\omega) \to \RR$$
that take values in the real numbers, and which satisfy a list of axioms that will be omitted. The spectral invariants that we consider here are constructed as follows. For a pair of exact Lagrangian submanifolds $L_0,L_1 \subset (X,d\lambda)$ (the symplectic manifold is thus necessarily exact) one can associate the Floer complex $CF(L_0,\phi(L_1))$ to any Hamiltonian diffeomorphism $\phi \in Ham(X,\omega)$ endowed with its canonical action filtration. Spectral invariants are certain real numbers which encode information about this filtered chain complex. In order to make this precise, we utilise the language of the {\bf barcode} from persistent homology from topological data analysis, which goes back to work by Carlsson--Zomorodian-Collins--Guibas \cite{Carlsson}; see Section \ref{sec:barcode} for our description and also \cite{Polterovich:Persistence} for a thorough introduction. The barcode can be defined for any chain complex $(C,\partial,\mathfrak{a})$ with a filtration
by subcomplexes $\mathfrak{a}^{-1}[-\infty,c] \subset C_*$ defined by an "action" function
$$\mathfrak{a} \colon C \to \{-\infty\} \cup \RR.$$
Phrased in this language, the spectral invariants are the values of the starting points of the semi-infinite bars of the barcode associated to the Floer complex. In fact, the main interest here is not the spectral invariants themselves, but rather the following derived concepts (see Definition \ref{dfn:main}):
\begin{itemize}
\item The {\bf spectral range} of a filtered complex, denoted by
$$\rho(C,\partial,\mathfrak{a}) \in \{-\infty\} \cup [0,+\infty].$$
This quantity is defined as the maximal distance between the starting points of two semi-infinite bars in the corresponding barcode.
\item The {\bf boundary depth} of a filtered complex, denoted by
$$\beta(C,\partial,\mathfrak{a}) \in \{-\infty\} \cup [0,+\infty].$$
This quantity is defined as the maximal length of a finite bar in the corresponding barcode.
\end{itemize}
For the Floer complex $CF(L,\phi^1_H(L))$ of a Lagrangian and its Hamiltonian deformation, the spectral range coincides with the {\bf spectral norm}, which we define as
$$ \gamma( CF(L,\phi^1_H(L)) \coloneqq \rho(CF(L,\phi^1_H(L)).$$
In general, the spectral norm can be defined whenever the complex satisfies Poincar\'{e} duality in a certain technical sense. (Since we do go into the details of the axioms of spectral invariants here, the difference between the concept of spectral range and spectral norm necessarily becomes obscure.)

We also need a generalisation of the above spectral invariants to contact manifolds. Since we will only consider with contact manifolds of a very particular type, namely contactisations
$$(Y,\alpha)=(X \times \RR,dz+\lambda)$$
of exact symplectic manifolds $(X,d\lambda)$ (see Section \ref{sec:contact}), this can be done by relying on well-established techniques. From our point of view, the spectral invariants of a contact manifold are defined for the group of contactomorphisms which are contact-isotopic to the identity, and yield functions
$$ c \colon Cont_0(Y,\alpha) \to \RR.$$
Note that the value does depend on the choice of contact form $\alpha$ here, and not just on the contact structure $\ker\alpha \subset TY$. It should be noted that spectral invariants in the contact setting are much less studied and developed than the symplectic version. However, the original formulation of the spectral invariants, which appeared in \cite{Viterbo:Generating} for symplectic cotangent bundles $(X,\omega)=(T^*M,d(p\,dq))$, admits a straight-forward generalisation to the standard contact jet-space
$$(J^1M=T^*M\times \RR,dz-p\,dq),$$
as shown by Zapolsky \cite{Zapolsky:Jet}. In fact, the spectral invariants in \cite{Viterbo:Generating} are based on a version of Floer homology defined using generating families, and that this theory can be generalised to invariants of Legendrian isotopies inside jet-spaces by work of Chekanov \cite{Chekanov:Generating}. Note that jet-spaces are particular cases of contactisations.

The spectral invariants considered here can be defined either by using generating families as in \cite{Zapolsky:Jet}, or by using a Floer homology constructed by using the Chekanov--Eliashberg algebra as first done in \cite{DualityLeg} by Ekholm--Etnyre--Sabloff; also see work \cite{Dimitroglou:Cthulhu} by the author together with Chantraine--Ghiggini--Golovko. (Strictly speaking, all axioms of the spectral invariants have not yet been established in the latter setting, but this does not affect the results here.) More precisely, given a pair of Legendrians $\Lambda_0$ and $\Lambda_1$, the spectral invariants are associated to the barcode of the Floer complex $CF(\Lambda_0,\phi(\Lambda_1))$ where $\phi$ is a contactomorphism which is contact isotopic to the identity. In fact, the Floer homology for the exact Lagrangians that we use here will be defined by using exactly the same technique; we lift the exact Lagrangian submanifold to a Legendrian submanifold of the corresponding contactisation, and then use the Floer complex in the contact setting. More details are given in Section \ref{sec:floer}.

Viterbo conjectured in \cite{Viterbo:Homogen} that the spectral norm $\gamma(CF(0_{\TT^n},\phi(0_{\TT^n})))$ of the Floer complex of the zero section $0_\TT^n \subset T^*\TT^n$ satisfies a uniform bound whenever $\phi \in Ham(T^*\TT^n)$ maps the zero section $\phi(0_{\TT^n}) \subset DT^*\TT^n$ into the unit cotangent bundle. In recent work by Shelukhin \cite{Shelukhin:Viterbo1}, \cite{Shelukhin:Viterbo2} this property was finally shown to be the case, even for a wide range of cotangent bundles beyond the torus case. The main point of our work here is to give examples of geometric settings beyond symplectic co-disc bundles, where the analogous boundedness of the spectral norm fails. It should be stressed that, in the time of writing of this article, there are still many cases of cotangent bundles for which the original formulation of the problem remains open: does the spectral norm of an exact Lagrangian inside $DT^*M$ which is Hamiltonian isotopic to the zero-section satisfy a uniform bound for any closed smooth manifold $M$?

As a first result, in Part (1) of Theorem \ref{mainthm}, we show that the spectral norm of Legendrians inside the contactisation $D^*S^1 \times \RR \subset J^1S^1$ which are Legendrian isotopic to the zero section does not satisfy a uniform bound. Recall that any Hamiltonian isotopy of $0_{S^1} \subset DT^*S^1$ lifts to a Legendrian isotopy of the zero section $j^10 \subset J^1S^1$ (see Lemma \ref{lma:lift}); consequently, one way to formulate Part (1) of Theorem \ref{mainthm} is by saying that Viterbo's conjecture cannot be generalised to Legendrian isotopies.

Below we denote by
$$F_{\theta_0,z_0}=\{\theta=\theta_0,z=z_0\} \subset J^1S^1$$
the Legendrian lift of a cotangent fibre $T^*_{\theta_0}S^1$.
\begin{mainthm}
\label{mainthm}
\begin{enumerate}
\item There exists a Legendrian isotopy of the zero section $j^10 \subset J^1S^1$ which satisfies
$$\phi^t \colon j^10 \hookrightarrow DT^*S^1 \times \RR=(S^1 \times [-1,1]) \times \RR,$$
and for which $CF(j^10,\phi^t(j^1))$ all are generated by precisely two mixed Reeb chords, whose difference in action moreover becomes arbitrarily large as $t\to+\infty$. In particular, the spectral norm $\gamma(CF(j^10,\phi^1(j^10)))$ becomes arbitrarily large as $t \to +\infty$.
\item There exists a Legendrian isotopy of the standard unknot $\Lambda_0 \subset J^1S^1$ shown in Figure \ref{fig:c} which satisfies
$$\phi^t \colon \Lambda_0 \hookrightarrow DT^*S^1 \times \RR_{>0}=(S^1 \times [-1,1]) \times \RR_{>0} \subset J^1S^1,$$
and where the boundary depth $\beta(CF(\phi^1(\Lambda_0),F_{\theta_0,z}))$ becomes arbitrarily large as $t\to+\infty$. In addition, we may assume that $\phi^t$ is supported inside some subset $\{z \ge c\}$ for which the inclusion $\{z \ge c \} \cap \Lambda_0 \subsetneq \Lambda_0$ is a strict subset. 
\end{enumerate}
\end{mainthm}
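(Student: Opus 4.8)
The plan is to construct both Legendrian isotopies explicitly and very concretely by working in the front projection over $S^1$, where a Legendrian is essentially a (multi-valued) function $z(\theta)$ with $p = dz/d\theta$, and where a Reeb chord corresponds to a pair of sheets with equal slope at the same $\theta$. The guiding principle is the following: a Legendrian isotopy of the zero section in $J^1 S^1$ need not respect the action filtration because, unlike in the Lagrangian $DT^*S^1$ where Hamiltonian isotopies preserve the symplectic area sandwiched between $0_{S^1}$ and its image (hence the Calabi-type control that underlies Shelukhin's theorem), in the contactisation the $z$-coordinate is an independent direction we can push around freely as long as the Reeb chord data stays controlled.

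For Part (1): I would start from $j^1 0$ and describe a time-dependent front that looks like a small "wave" of the zero section, i.e. a graph $z = \varepsilon f_t(\theta)$ where $f_t$ is a fixed bump-shaped profile whose amplitude or whose structure is arranged so that the resulting Legendrian $\Lambda_t = \phi^t(j^1 0)$ has its front crossing $z = 0$ transversally, creating exactly two Reeb chords between $\Lambda_t$ and $j^1 0$ (one going up, one going down), with actions $\mathfrak{a}_\pm(t)$ equal to the $z$-values at the two crossing points. By choosing the isotopy so that one sheet is dragged further and further in the $z$-direction — while keeping the $p = \partial z/\partial\theta$ values inside $[-1,1]$ so that $\Lambda_t \subset DT^*S^1 \times \RR$ throughout — the difference $\mathfrak{a}_+(t) - \mathfrak{a}_-(t)$ grows without bound. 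One then checks that the Floer complex $CF(j^1 0, \Lambda_t)$ has exactly these two generators in every degree count (a bigon/grading computation: the two chords must be in consecutive degrees, so the differential vanishes for degree reasons, and both bars are semi-infinite), so the barcode has two semi-infinite bars whose endpoints are exactly $\mathfrak{a}_\pm(t)$, giving $\gamma = \mathfrak{a}_+(t) - \mathfrak{a}_-(t) \to \infty$. The lift statement (Hamiltonian isotopies of $0_{S^1}$ lift to Legendrian isotopies) is Lemma \ref{lma:lift}, so this indeed shows Viterbo's conjecture does not generalise to Legendrian isotopies; but note the constructed isotopy itself need not be such a lift — that is the whole point.

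For Part (2): here the object is the standard Legendrian unknot $\Lambda_0 \subset J^1 S^1$ of Figure \ref{fig:c}, which sits in $\{z > 0\}$, and the invariant is the boundary depth $\beta(CF(\phi^1(\Lambda_0), F_{\theta_0,z}))$ — the length of the longest finite bar — rather than the spectral range. The strategy is to isotope $\Lambda_0$ through Legendrians contained in $DT^*S^1 \times \RR_{>0}$, all of which are still unknots, but so that a "finger" of the front is pushed up in the $z$-direction and simultaneously dragged around so that, intersected with a fixed fibre front $F_{\theta_0, z}$, it produces a pair of Reeb chords bounding a rigid bigon whose action difference (hence the length of the corresponding finite bar in the barcode of $CF(\phi^1(\Lambda_0), F_{\theta_0,z})$) grows linearly in $t$. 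The support condition — that $\phi^t$ is supported in some $\{z \ge c\}$ with $\{z \ge c\} \cap \Lambda_0 \subsetneq \Lambda_0$ — is arranged by doing all the front manipulations in the region above the lowest sheet of $\Lambda_0$, leaving a neighbourhood of the bottom of the unknot untouched; one just has to verify at the outset that the front of $\Lambda_0$ genuinely has two distinct $z$-levels over a non-empty set of $\theta$, which is visible from Figure \ref{fig:c}.

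I expect the main obstacle to be neither the geometric construction (which is a matter of drawing the right fronts) nor the action computation (which reduces to reading off $z$-values), but rather rigorously pinning down the Floer complex — in (1) showing the two chords are the only generators and the differential vanishes, and in (2) identifying precisely which finite bar the long rigid bigon produces, i.e. controlling $CF(\phi^1(\Lambda_0), F_{\theta_0,z})$ well enough to guarantee that there is no "cancelling" differential shortening the relevant bar. This is where one must invoke the structure of the Floer complex via the Chekanov–Eliashberg algebra (as in \cite{DualityLeg}, \cite{Dimitroglou:Cthulhu}), combined with a careful bigon count in these explicit front diagrams and grading/index arguments; the details are deferred to Sections \ref{sec:barcode}, \ref{sec:floer}.
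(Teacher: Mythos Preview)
Your plan for Part~(1) has a genuine gap. If $\Lambda_t$ is a section $j^1g_t = \{z=g_t(\theta),\,p=g_t'(\theta)\}$, then the mixed Reeb chords with $j^10$ sit over the critical points of $g_t$ (where the $p$-coordinates agree), not where the front crosses $\{z=0\}$; their actions are the critical values of $g_t$. The constraint $\Lambda_t \subset DT^*S^1\times\RR$ reads $|g_t'|\le 1$, and on $S^1=\RR/\ZZ$ this forces $\max g_t-\min g_t\le \tfrac12$. So for a graphical front the action gap between the two chords is uniformly bounded, and your ``drag one sheet further and further in $z$'' is impossible without leaving $DT^*S^1\times\RR$. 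This is precisely the content of Shelukhin's bound in the Lagrangian case: a section $j^1g$ is the Legendrian lift of the Hamiltonian image $\mathrm{graph}(dg)$ of $0_{S^1}$, and for those the spectral norm \emph{is} bounded. To break the bound one must produce a Legendrian isotopy that is \emph{not} the lift of a Hamiltonian isotopy, hence not a graph. The paper does this by first performing an \emph{RI}-move on a small perturbation $j^1f$ (creating cusps, so the front becomes multi-sheeted), and then rescaling the resulting local picture in the universal cover $J^1\RR$ via $(q,p,z)\mapsto(e^tq,p,e^tz)$ before projecting back to $J^1S^1$. The rescaling keeps $p$ fixed while letting the front wrap around the circle arbitrarily many times, so the relevant $z$-gap grows without bound; one then checks via the front that exactly two mixed chords survive.

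For Part~(2) your outline is closer to the paper's, which also works by rescaling in the universal cover and projecting. The point you defer---controlling the extra chords that appear after projection so that the long bar is not shortened---is handled in the paper by a specific trick you should know about: since $\Lambda_0$ is null-homotopic, one is free to change the symplectic trivialisation of $T(T^*S^1)$ by adding $N\gg 0$ full twists of the frame around the base circle. This leaves the grading difference $|c_t|-|d_t|=1$ unchanged but pushes the degrees of all the ``new'' chords (those coming from the wrapping) arbitrarily far away, so for degree reasons $\partial c_t = d_t$ is forced and Corollary~\ref{cor:depth} gives the bar $[\mathfrak a(d_t),\mathfrak a(c_t))$. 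A direct bigon count would be considerably more painful.
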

In recent work \cite[Section 6.2]{Biran:Bounds} Biran--Cornea showed that a bound $\gamma(CF(0^M,L)) \le C$ on the spectral norm of the Floer complex of a Lagrangian $L \subset T^*M$, where $L$ is Hamiltonian isotopic to the zero section, implies the bound $\beta(CF(L,T^*_{pt}M)) \le 2C$ on the boundary depth of the Floer complex of $L$ and a cotangent fibre. The Legendrians produced by Part (2) of Theorem \ref{mainthm} can be used to show that the analogous result cannot be generalised to Legendrian isotopies. More precisely,
\begin{maincor}
There exists a Legendrian isotopy of the zero section $j^10 \subset J^1S^1$ that satisfies
$$\phi^t \colon j^10 \hookrightarrow DT^*S^1 \times \RR=(S^1 \times [-1,1]) \times \RR,$$
and for which the spectral norm $\gamma(CF(j^10,\phi^1(j^10))$ is uniformly bounded for all $t \ge 0$, while the boundary depth $\beta(CF(\phi^1(0_{S^1}),F_{\theta_0,z_0}))$ becomes arbitrarily large as $t \to +\infty$.
\end{maincor}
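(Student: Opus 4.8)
We deduce this from Part~(2) of Theorem~\ref{mainthm}; the ``in addition'' clause there --- that the isotopy may be taken supported in $\{z\ge c\}$ while $\Lambda_0$ still has a point at some height $z_1<c$ --- is exactly what makes the following grafting possible. Let $\psi^t\colon\Lambda_0\hookrightarrow DT^*S^1\times\RR_{>0}$ ($t\ge0$) be the Legendrian isotopy of Part~(2), supported in $\{z\ge c\}$; fix $q\in\Lambda_0\setminus\{z\ge c\}$ at a height $z_1<c$ and a level $c'$ with $z_1<c'<c$. Join $j^10$ to $\Lambda_0$ by a Legendrian connected sum along a path from a point of $j^10$ up to $q$, chosen so that the path, and hence the joining neck, lies in $\{z\le z_1\}$, inside $DT^*S^1\times\RR$, and disjoint from $F_{\theta_0,z_0}$; write $\Lambda'\coloneqq j^10\#\Lambda_0$. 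Then $\Lambda'$ agrees with $j^10$ away from a neighbourhood of the neck together with $\Lambda_0$, it is confined to a bounded range of $z$-values, and --- $\Lambda_0$ being the standard Legendrian unknot --- it is Legendrian isotopic to $j^10$ through an isotopy inside $DT^*S^1\times\RR$. As $\psi^t$ is supported in $\{z\ge c\}$, it acts only on the sub-arc $\Lambda_0\cap\{z\ge c\}$ of $\Lambda'$, away from the neck; hence it extends by the identity to a Legendrian isotopy of $\Lambda'$, and concatenating the isotopy $j^10\rightsquigarrow\Lambda'$ with this extension gives a Legendrian isotopy $\phi^t$ of $j^10$ inside $DT^*S^1\times\RR$ with $\Lambda^t\coloneqq\phi^t(j^10)=j^10\#\psi^t(\Lambda_0)$ for $t$ large (after reparametrising the initial segment).

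\emph{Boundedness of the spectral norm.} The Reeb flow being $\partial_z$ and $j^10\subset\{z=0\}$, every mixed Reeb chord of $j^10$ ending above the level $z=c'$ has action at least $c'$; combined with an action--energy estimate --- after a neck-stretching along $\{z=c'\}$, taken long enough that crossing it costs more energy than any strip below action $c'$ can carry, no Floer strip with all endpoints below action $c'$ reaches $\{z\ge c'\}$ --- this shows that the truncation $CF^{<c'}(j^10,\Lambda^t)$, as a filtered complex together with the comparison map induced by $\phi^t$, equals $CF^{<c'}(j^10,\Lambda')$ and is in particular independent of $t$ (recall $\phi^t$ is the identity on $\{z<c\}$). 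Now $\Lambda'$ is a fixed Legendrian obtained from $j^10$ by attaching the standard Legendrian unknot; since $\Lambda'$ is Legendrian isotopic to $j^10$ while the attached unknot contributes an acyclic summand, this operation adds only finite bars to the barcode of $CF(j^10,\cdot)$. Thus the two semi-infinite bars of $CF(j^10,\Lambda')$, which together carry $HF(j^10,\Lambda')\cong H_*(S^1)$, are those of the slightly perturbed zero section and start near action $0$, hence below $c'$; and, $\Lambda'$ being confined to a bounded range of $z$-values, the entire barcode of $CF(j^10,\Lambda')$ lies in a bounded action window. In particular each of the two homology classes is represented by a cycle in $CF^{<c'}(j^10,\Lambda')=CF^{<c'}(j^10,\Lambda^t)$; as these classes stay non-zero in $HF(j^10,\Lambda^t)$, both semi-infinite bars of $CF(j^10,\Lambda^t)$ begin below action $c'$ too. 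Therefore $\gamma(CF(j^10,\phi^t(j^10)))\le c'$ for all $t\ge0$.

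\emph{Growth of the boundary depth.} Take $F_{\theta_0,z_0}$ to be the cotangent fibre appearing in Part~(2); recall the neck was chosen disjoint from it. Since $\phi^t$ moves only the part of $\Lambda^t$ in $\{z\ge c\}$, a copy of $\Lambda_0\cap\{z\ge c\}$ lying away from a neighbourhood of $F_{\theta_0,z_0}$, the mixed chords of $\Lambda^t$ with $F_{\theta_0,z_0}$ are those of $\psi^t(\Lambda_0)$ with $F_{\theta_0,z_0}$ together with a $t$-independent collection coming from the part of $\Lambda^t$ near $z=z_1$; hence $CF(\psi^t(\Lambda_0),F_{\theta_0,z_0})$ occurs inside $CF(\Lambda^t,F_{\theta_0,z_0})$ as a filtered direct summand up to a $t$-independent action shift. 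Consequently $\beta(CF(\phi^t(0_{S^1}),F_{\theta_0,z_0}))\ge\beta(CF(\psi^t(\Lambda_0),F_{\theta_0,z_0}))-O(1)$, and by Part~(2) of Theorem~\ref{mainthm} the right-hand side tends to $+\infty$ as $t\to+\infty$.

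The technical heart of the argument is the ``$z$-localisation'' of the Floer theory used twice above: that $CF^{<c'}(j^10,\Lambda^t)$ ignores the part of $\Lambda^t$ in $\{z\ge c\}$, and that $CF(\Lambda^t,F_{\theta_0,z_0})$ splits off the fibre-complex of $\psi^t(\Lambda_0)$. Both reduce to a single neck-stretching argument near the level $\{z=c'\}$, together with the elementary action bound for Reeb chords of $j^10$ reaching above $z=c'$; once these are established the rest is routine bookkeeping of barcodes, so this localisation is where the main work lies.
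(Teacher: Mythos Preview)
Your construction --- the cusp-connected sum of (a small perturbation of) $j^10$ with the unknot $\Lambda_0$ from Part~(2), followed by the isotopy $\psi^t$ acting on the unknot piece --- is exactly the paper's. The divergence is in how you analyse the Floer complexes of the result.

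The paper does this via Theorem~\ref{thm:ambientsurgery}: the cusp-connected sum carries a standard Lagrangian handle-attachment cobordism whose induced DGA morphism yields an action-preserving isomorphism
\[
CF\bigl((\Lambda^t,\varepsilon_+),(\Lambda,\varepsilon)\bigr)\;\cong\;CF\bigl((j^1f\sqcup\psi^t(\Lambda_0),\varepsilon_-),(\Lambda,\varepsilon)\bigr)
\]
for any test Legendrian $\Lambda$. The right-hand side is the Floer complex of a \emph{disjoint} union and hence splits as a filtered direct sum $CF(j^1f,\Lambda)\oplus CF(\psi^t(\Lambda_0),\Lambda)$. Both conclusions are then immediate: with $\Lambda=j^10$ the second summand is acyclic (translate $\Lambda_0$ far down in $z$), so all semi-infinite bars sit in the $t$-independent Morse complex $CF(j^1f,j^10)$; with $\Lambda=F_{\theta_0,z_0}$ the boundary depth of the whole is at least that of the second summand, which is unbounded by Part~(2).

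Your ``$z$-localisation'' via neck-stretching is where the gap lies. The pseudoholomorphic discs that define the linearised differential live in the Lagrangian projection $T^*S^1$, not in $J^1S^1$; the $z$-coordinate is visible to them only through the actions of their asymptotic chords. ``Neck-stretching along $\{z=c'\}$'' is therefore not a well-posed operation on these discs, and an energy bound alone does not prevent a disc with both mixed asymptotics below action $c'$ from having boundary that runs along the portion of $\Pi(\Lambda^t)$ lifting to $\{z\ge c\}$ --- and hence from depending on $t$. The same objection applies to your direct-summand claim for $CF(\Lambda^t,F_{\theta_0,z_0})$: the generators split by component, but showing the differential respects the splitting is precisely the content of Theorem~\ref{thm:ambientsurgery}, proved there through the cobordism DGA map rather than any stretching. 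A smaller point in the same paragraph: from ``both semi-infinite bars start below $c'$'' one cannot conclude $\gamma\le c'$, since the spectral range is a \emph{difference} of starting points; what you actually use is that the starting points lie among the actions of the fixed, $t$-independent generators of $CF^{<c'}$, which gives a uniform bound but not the one you wrote.
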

\begin{proof}
Take a cusp-connected sum of a $C^1$-small perturbation of the zero-section $j^10$ and any unknot $\Lambda_t$ from the family produced by Part (2) of Theorem \ref{mainthm}; the case of $\Lambda_0$ is shown in Figure \ref{fig:c}. We refer to \cite{Dimitroglou:Ambient} for the definition of cusp-connected sum (also called ambient Legendrian 0-surgery) along a Legendrian arc (the so-called surgery disc). We perform the cusp-connected sum along a Legendrian arc which is contained inside the region $\{z<c\}$ which is disjoint from the support of the Legendrian isotopy of the unknots. Note that the Legendrian resulting from the cusp-connected sum is Legendrian isotopic to the zero-section, as shown in Figure \ref{fig:c}. It follows that the same is true for the cusp-connected sum of $j^10$ and any Legendrian $\Lambda_t$ from the family.

Finally, in order to evaluate the effect of the ambient surgery on the barcodes of the Floer complexes we apply Theorem \ref{thm:ambientsurgery}. To that end, the following two facts are needed. First, $CF(\Lambda_0,j^10)$ is acyclic, and thus its barcode consists of only finite bars. This follows by the invariance under Legendrian isotopy. (After a translation of $\Lambda_0$ sufficiently far in the negative $z$-direction, all generators of the Floer complex disappear.)  Second,
$$CF(j^1f \cup \Lambda_t, j^10)=CF(j^1f,j^10)\oplus CF(\Lambda_t,j^10)$$
is a direct sum of complexes. The barcode is thus the union of barcodes.
\end{proof}
\begin{figure}[htp]
	\vspace{3mm}
	\labellist
	\pinlabel $\color{blue}\Lambda_0$ at 75 50
	\pinlabel $\color{blue}j^10$ at 75 19
	\pinlabel $\color{blue}\Lambda_-$ at 225 32
	\pinlabel $\color{blue}\Lambda_+$ at 360 32
	\pinlabel $z$ at 56 89
	\pinlabel $z$ at 192 89
	\pinlabel $z$ at 327 89
	\pinlabel $\theta$ at 122 27
	\pinlabel $\theta$ at 257 27
	\pinlabel $\theta$ at 392 27
	\pinlabel $c$ at 63 36
	\pinlabel $\frac{1}{2}$ at 110 35
	\pinlabel $-\frac{1}{2}$ at 0 35
	\endlabellist
	\includegraphics{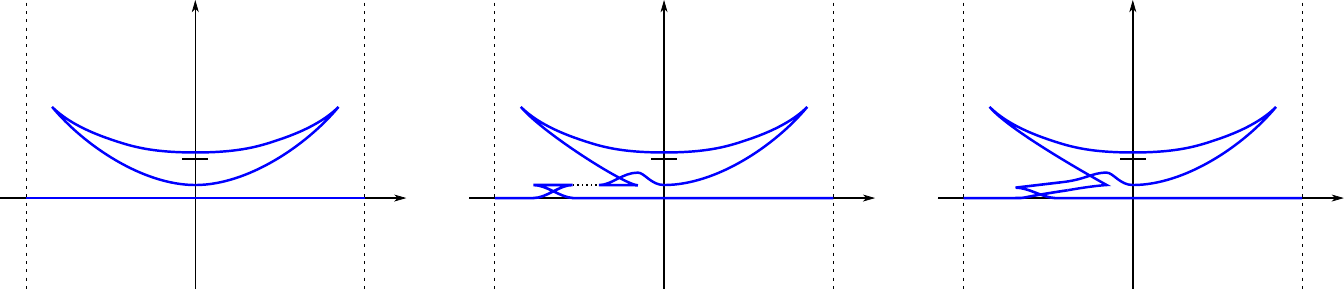}
	\caption{\emph{Left:} the front projection of the zero section $j^10 \subset J^1(\RR/\ZZ)=J^1S^1$ and a standard Legendrian unknot $\Lambda_0$. \emph{Middle:} The result of a Legendrian \emph{RI}-move on each component, $\Lambda_-$ denotes the union of the two components. \emph{Right:} The Legendrian $\Lambda_+$ which is the result after a cusp-connected sum along the dotted arc shown in the middle picture. $\Lambda_+$ is Legendrian isotopic to the zero section ($\Lambda_+$ is obtained by performing two \emph{RI}-moves on the zero-section).}
	\label{fig:c}
\end{figure}

\begin{mainthm}
\label{thm:ambientsurgery}
If $\Lambda_+$ is a Legendrian obtained from $\Lambda_-$ by a Legendrian ambient surgery. After making the surgery-region sufficiently small, we can assume that there is an action-preserving isomorphism
$$CF((\Lambda_+,\varepsilon_+),(\Lambda,\varepsilon)) \to CF((\Lambda_-,\varepsilon_-),(\Lambda,\varepsilon))$$
of complexes, where $(\Lambda,\varepsilon)$ is an arbitrary but fixed Legendrian, and where the augmentation $\varepsilon_+$ is induced by pulling back the augmentation $\varepsilon_-$ under the DGA-morphism induced by the standard Lagrangian handle-attachment cobordism. In particular, the barcodes of the two Floer complexes coincide.
\end{mainthm}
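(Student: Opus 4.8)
The plan is to show that, once the Weinstein handle (equivalently, the surgery disc) is taken small enough, the tautological identification of generators coming from the fact that $\Lambda_+$ and $\Lambda_-$ literally coincide away from the surgery region intertwines the two differentials, and that this identification is action‑preserving for trivial reasons. Recall that $CF((\Lambda_i,\varepsilon_i),(\Lambda,\varepsilon))$ is the bilinearised Legendrian contact cohomology complex of the two‑component link $\Lambda_i \cup \Lambda$: it is generated by the mixed Reeb chords between $\Lambda_i$ and $\Lambda$; the differential counts rigid pseudoholomorphic discs in the symplectisation carrying one positive and one negative mixed puncture together with pure negative punctures on $\Lambda_i$ and on $\Lambda$ that are weighted by $\varepsilon_i$ and by $\varepsilon$ respectively; and the action of a generator is the length of the corresponding chord, which is strictly decreased by the differential. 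The ambient surgery is supported inside an arbitrarily small Darboux ball $U$ around the surgery arc, and since this arc lies on $\Lambda_-$ we may shrink $U$ so that $\Lambda \cap U = \emptyset$. Consequently the mixed Reeb chords between $\Lambda_+$ and $\Lambda$ coincide, with the same lengths, with those between $\Lambda_-$ and $\Lambda$, which gives the asserted action‑preserving identification of the underlying filtered graded vector spaces; it remains to match the two differentials under this identification.

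First I would set up the degeneration in which the handle has width $\delta \to 0$, so that $\Lambda_+=\Lambda_+^\delta$ converges to $\Lambda_-$ away from the cusp edges created by the preparatory \emph{RI}-moves. By the \emph{SFT} compactness theorem, a sequence of rigid discs contributing to the differential $\partial_+$ of $CF((\Lambda_+^\delta,\varepsilon_+),(\Lambda,\varepsilon))$ converges to a pseudoholomorphic building consisting of a principal disc $v$ with boundary on $\Lambda_- \cup \Lambda$ — carrying the two mixed punctures and the pure negative punctures on $\Lambda$ and on the part of $\Lambda_-$ away from $U$ — together with rigid pseudoholomorphic buildings in the standard Lagrangian handle‑attachment cobordism of \cite{Dimitroglou:Ambient} and its symplectisation ends, which cap off the negative punctures of $v$ that limit onto Reeb chords located in the surgery region. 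Because $\Lambda$ is disjoint from $U$, no disc bubbling occurs along the $\Lambda$‑boundary, and $v$ is genuinely a disc counted by $\partial_-$. The decisive point is then the bookkeeping: the algebraic count, weighted by $\varepsilon_-$, of the capping buildings in the handle cobordism is by construction the value, on the relevant word of Reeb chords, of the DGA‑morphism $\Phi_\Sigma$ induced by that cobordism, precomposed with $\varepsilon_-$; and $\varepsilon_+ = \varepsilon_- \circ \Phi_\Sigma$ is precisely the definition of $\varepsilon_+$. Hence, after filling pure punctures with the augmentations, every term of $\partial_+$ is matched with a term of $\partial_-$ carrying the same coefficient, and a gluing argument in the opposite direction upgrades this to a bijection for all $\delta$ below some threshold. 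Since the chords, and therefore the action filtration, are untouched by the degeneration, the resulting chain isomorphism is action‑preserving; as the barcode of a filtered complex depends only on its filtered chain‑isomorphism type, the two barcodes coincide.

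The analytic core — and the step I expect to be the main obstacle — is the compactness‑and‑gluing argument just sketched. One must show that for sufficiently small handle width the discs contributing to $\partial_+$ degenerate only in the prescribed way: in particular one has to exclude limit buildings with superfluous components (extra symplectisation levels over $\Lambda_+$, additional discs in the handle region beyond those recorded by $\Phi_\Sigma$, disc bubbles, and so on), and one needs transversality for the moduli spaces localised in the handle — this is where the \emph{standard} model of the handle‑attachment cobordism is essential, since its rigid holomorphic discs are understood explicitly in \cite{Dimitroglou:Ambient}. Running the inverse gluing to obtain a genuine bijection of weighted counts, rather than merely an equality of differentials up to chain homotopy, is the other delicate point; it is facilitated by first making the handle so thin that every contributing disc is either disjoint from $U$ or thin and localised near it, and by noting that the handle then contributes symplectic area only $O(\delta)$, so that the identification of generators is literally the chord identification of the first paragraph and the isomorphism is action‑preserving on the nose. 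Alternatively, one can shortcut part of this by invoking the functoriality of the Floer complex under exact Lagrangian cobordisms from \cite{Dimitroglou:Cthulhu}: the handle cobordism $\Sigma$ from $\Lambda_-$ to $\Lambda_+$ (together with the trivial cylinder on $\Lambda$) induces the map $CF((\Lambda_+,\varepsilon_+),(\Lambda,\varepsilon)) \to CF((\Lambda_-,\varepsilon_-),(\Lambda,\varepsilon))$ appearing in the statement, which — since $\Sigma$ differs from the trivial Lagrangian cylinder only inside a small handle disjoint from $\Lambda$ — must be an action‑preserving isomorphism once the handle is small enough; but checking that the quantitative estimates there suffice to conclude exact action‑preservation again reduces to the same local analysis.
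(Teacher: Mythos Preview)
Your alternative route at the end---using the cobordism-induced chain map---is exactly what the paper does, and once you see how the paper finishes it, your ``main obstacle'' disappears entirely. The paper does \emph{not} carry out any new SFT compactness or gluing analysis. Instead it observes that the handle-attachment cobordism $\mathcal{L}$ (trivial on the $\Lambda$-component) induces a unital DGA-morphism $\Phi_{\mathcal{L}}\colon \mathcal{A}(\Lambda_+\cup\phi^T_{\partial_z}\Lambda)\to\mathcal{A}(\Lambda_-\cup\phi^T_{\partial_z}\Lambda)$, whose linearisation by the augmentations is automatically a chain map $CF((\Lambda_+,\varepsilon_+),(\Lambda,\varepsilon))\to CF((\Lambda_-,\varepsilon_-),(\Lambda,\varepsilon))$. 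The decisive input is then simply a \emph{citation}: by \cite[Theorem 1.1]{Dimitroglou:Ambient}, when the handle is small enough each mixed chord $c$ satisfies $\Phi_{\mathcal{L}}(c)=c+\sum_i r_i\mathbf{d}_i$, where every word $\mathbf{d}_i$ contains only mixed chords of action strictly less than $\ell(c)$. Linearising, the chain map is therefore of the form $\mathrm{Id}+(\text{strictly action-lowering})$, hence an action-preserving isomorphism by elementary linear algebra. No further ``local analysis'' or quantitative estimate is needed; the exact action-preservation is a formal consequence of this upper-triangular shape.

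Your primary strategy---showing that the \emph{tautological} identification of generators intertwines $\partial_+$ and $\partial_-$---is a different and strictly stronger statement than what the paper proves: the paper's isomorphism is the linearised $\Phi_{\mathcal{L}}$, which is $\mathrm{Id}+N$ with $N$ action-decreasing, not the identity on the nose. Your degeneration argument (letting the handle width $\delta\to 0$ and invoking SFT compactness) is plausible in spirit, but the limit $\Lambda_+^\delta\to\Lambda_-$ is not smooth, so one would have to recast it as a neck-stretching and then control the buildings; carrying this out rigorously would essentially reprove the content of \cite[Theorem 1.1]{Dimitroglou:Ambient} together with the chain-map property of the cobordism map. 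That is a legitimate path, but the paper bypasses it entirely by citing the computed form of $\Phi_{\mathcal{L}}$ and arguing algebraically.
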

In the setting of exact Lagrangian cobordisms in the sense of Arnol'd between exact Lagrangian submanifolds similar results were found in \cite[Section 5.3]{Biran:Bounds} .

Finally we present a Hamiltonian isotopy of a closed exact Lagrangian inside a Liouville domain for which the spectral norm becomes arbitrarily large. The simplest examples of such a Liouville domain is the 2-torus with an open ball removed; we denote this by $(\Sigma_{1,1},d\lambda)$ and depict it in Figure \ref{fig:t1}. The detailed construction is given in Section \ref{sec:torus}.

\begin{mainthm}
\label{mainthm:torus}
There exists a closed exact Lagrangian submanifold $L \subset (\Sigma_{1,1},\omega)$ and a compactly supported Hamiltonian $H\colon \Sigma_{1,1} \to \RR$ for which the induced compactly supported Hamiltonian isotopy $\phi^t_{H} \colon (\Sigma_{1,1},\omega) \to (\Sigma_{1,1},\omega)$
satisfies the property that the spectral norm $\gamma(CF(L,\phi^t_{H}(L)))$ becomes arbitrarily large as $t \to +\infty$.
\end{mainthm}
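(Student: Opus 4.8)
The plan is to use the explicit Weinstein model in which $\Sigma_{1,1}$ is the plumbing of two copies of $DT^*S^1$; equivalently, $\Sigma_{1,1}$ is obtained from $DT^*S^1$ by attaching a single critical Weinstein one-handle with one foot on each of the two boundary circles of $DT^*S^1$ (this is the ``attaching a critical Weinstein one-handle'' of the abstract). Denote the two zero-sections by $\alpha$ and $\beta$; these are exact Lagrangian circles meeting transversally in one point, and I take $L\coloneqq\alpha$. Fix an annular neighbourhood $N\cong DT^*S^1$ of $\beta$ with coordinates $(q,p)$, $\lambda=p\,dq$, inside which $L$ appears as the cotangent fibre $\{q=0\}$.

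For the Hamiltonian isotopy, let $H=h(p)$ be a compactly supported autonomous Hamiltonian on $N$, extended by the identity, chosen so that $\phi^t_H(L)$ equals $L$ outside $N$ while inside $N$ it is the arc $\{q=t\,h'(p)\}$, which winds around the $\beta$-direction roughly $t$ times and back. Note that $\phi^t_H(L)$ stays embedded and isotopic to $L$ (the net winding is zero, so its homology class is unchanged), and that the winding region is only topologically available in $\Sigma_{1,1}$ because of the extra handle — in $DT^*S^1$ alone the analogous picture is constrained by the Viterbo/Shelukhin bound. Using the Legendrian-lift description of the Floer complex from Section~\ref{sec:floer}, the generators of $CF(L,\phi^t_H(L))$ are the $2t+O(1)$ points of $L\pitchfork\phi^t_H(L)$, and the action of a generator is the value at that point of the primitive of $\lambda$ along $\phi^t_H(L)$; along the winding arc this primitive picks up a contribution proportional to the number of windings, so the generators occupy an action window of length $\Theta(t)$.

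The remaining, and main, issue is to pass from the spread of the \emph{generators} to the spread of the \emph{barcode}. Since we are on a surface the Floer differential counts immersed bigons, so it can be read off the complementary regions of $L\cup\phi^t_H(L)$: all but two generators cancel in pairs (these are the finite bars), and the two survivors carry $HF(L,\phi^t_H(L))\cong H_*(S^1)$. One must arrange the winding so that these two semi-infinite bars — equivalently the two Lagrangian spectral invariants $c(\mathbf 1,L,\phi^t_H(L))$ and $c([L],L,\phi^t_H(L))$ — land at \emph{opposite} ends of the $\Theta(t)$ action window; the danger to be excluded is that the cancelling bigons peel the spiral from its top, leaving both surviving generators near the bottom, which would make only the boundary depth grow and not the spectral norm. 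I expect this bigon bookkeeping — exploiting the handle to obstruct the peeling, and if needed replacing $\alpha$ by a cusp-connected sum of $\alpha$ with an auxiliary curve in the spirit of Figure~\ref{fig:c}, or invoking Theorem~\ref{thm:ambientsurgery} to transport the count — to be the technical heart of Section~\ref{sec:torus}.

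Granting this, the barcode of $CF(L,\phi^t_H(L))$ has two semi-infinite bars whose starting points differ by a quantity growing linearly in $t$. Since the complex is quasi-isomorphic to $CF(L,L)$ it satisfies the Poincaré-duality property, so $\gamma(CF(L,\phi^t_H(L)))=\rho(CF(L,\phi^t_H(L)))$, and by definition the latter equals this difference; hence the spectral norm becomes arbitrarily large as $t\to+\infty$.
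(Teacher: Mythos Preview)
Your geometric setup is essentially the same as the paper's: $L$ is one of the two core circles of the plumbing, and the Hamiltonian wraps a small perturbation of $L$ around the other core. Where you diverge from the paper is precisely at the step you flag as the ``technical heart'': you propose to control the two semi-infinite bars by direct bigon combinatorics on the surface, possibly assisted by cusp-connected sums or Theorem~\ref{thm:ambientsurgery}, and you leave this step open. The paper does \emph{not} do any bigon bookkeeping, and neither cusp-connected sums nor Theorem~\ref{thm:ambientsurgery} enter the argument.

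Instead, the paper exploits the freedom in the choice of symplectic trivialisation of $T\Sigma_{1,1}$. Start with the two original intersection points $c,d$ of $L$ and its small perturbation $L'$; these have $|c|-|d|=1$ in any trivialisation. The additional generators created by the wrapping all arise from arcs of $\phi^t_\rho(L')$ that have traversed the $\beta$-handle some nonzero number of times. Now deform the standard trivialisation by inserting $N\gg 0$ full rotations of the symplectic frame along a cycle transverse to $L$ (in the paper's coordinates, along $\{p=1\}$). Since $L$ itself does not cross this cycle, it still admits a Maslov potential; but every new generator $c'$ picks up a grading shift proportional to $N$ times its winding number, so for $N$ large one has $|c'|-|c|\notin[-10,10]$. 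Part~(1) of Lemma~\ref{lma:spectralnormcomp} then applies directly: $c$ and $d$ are the unique generators in degrees $\dim L=1$ and $0$, hence they are cycles that are not boundaries, and the spectral range is at least $|\ell(c)-\ell(d)|$, which goes to $+\infty$ with $t$.

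So the gap in your proposal is not that bigon counting would fail---it could presumably be made to work---but that you have not carried it out, and the mechanism you anticipate (handle obstructing the peeling, auxiliary surgeries) is not the one the paper uses. The grading-via-trivialisation trick sidesteps the combinatorics entirely and is worth internalising: it is the same device used in Section~\ref{sec:unknot} for Part~(2) of Theorem~\ref{mainthm}.
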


\subsection{Why the proofs of uniform bounds fail for Legendrians}
\label{sec:why}

The techniques that are used in \cite{Shelukhin:Viterbo2} and \cite{Biran:Bounds} to prove the results in the case of the cotangent bundle are not yet fully developed in the case of Legendrians in contactisations. This includes the closed-open map, which is a crucial ingredient in \cite{Shelukhin:Viterbo2}, and a unital $A_\infty$-structure on the Floer complex with relevant PSS-isomorphisms, which is crucial in \cite{Biran:Bounds}. Nevertheless, we still do expect that these operations can be defined also for the Floer homology of Legendrians in contactisations. In fact the $A_\infty$-structure was recently extended to this setting by Legout \cite{Legout}. So this should not be the reason why the proofs break down. So, what then goes wrong in the proofs if one tries to generalise to the Legendrian case?

First we recall the properties of the Floer homology complex of a Legendrian and itself; see e.g.~\cite{DualityLeg} for the details. In order to define $CF(\Lambda,\Lambda)$ one must first make the mixed Reeb chords transverse by a Legendrian perturbation of the second copy of $\Lambda$. We do this by replacing $\Lambda$ with a section $j^1f$ in its standard contact jet-space neighbourhood, where $f \colon \Lambda \to \RR$ is a $C^1$-small Morse function. In this manner we obtain
$$CF(\Lambda,\Lambda)=C^{\mathrm{Morse}}(f;\kk) \oplus \bigoplus_{a \in \mathcal{Q}(\Lambda)} \kk p_c \oplus \kk q_c $$
where $\mathcal{Q}(\Lambda)$ denotes the set of Reeb chords on $\Lambda$, and $C^{\mathrm{Morse}}(f;\kk)$ is the Morse homology complex with basis given by the critical points of the function $f \colon \Lambda \to \RR$. The action of the former chords are approximately equal to $\mathfrak{a}(p_c)=\ell(c)$ and $\mathfrak{a}(q_c)=-\ell(c)$ while the action of the latter is equal to $\mathfrak{a}(x)=f(x)$. What is important to notice here is that the latter generators may be assumed to have arbitrarily small action, while this is not the case for the generators that correspond to pure Reeb chords. When $\Lambda$ is the Legendrian lift of a Lagrangian embedding, there are of course only chords of the former type. This turns out to be the crucial difference between the symplectic and the contact case.

\emph{Example in Part (1) of Theorem \ref{mainthm}:} The proof in \cite{Shelukhin:Viterbo2} uses the closed-open map. More precisely, a crucial ingredient in the proof is the action-preserving property of the operations $P'_{a}$ on the Floer homology $CF(0_M,\phi^1_H(0_M))$, which are defined using the length-0 part $\phi^0(a)$ and length-1 part $\phi^1(a,\cdot)$ of the closed open map for certain elements $a \in SH(T^*M)$ in symplectic homology. In the case when the Legendrian has pure Reeb chords (i.e.~it is not the lift of an exact Lagrangian embedding), the chain $\phi^0(a) \in CF(\Lambda,\Lambda)$ may thus consist of generators whose action does not vanish (since they do not correspond to Morse generators). In this case the action-preserving property of $P'_{a}$ in terms of merely the action of the element $a \in SH(T^*M)$ is lost.

\emph{Example in Part (2) of Theorem \ref{mainthm}:} The proof in \cite[Section 6.2]{Biran:Bounds} uses the fact that there are continuation elements $a \in CF(\phi^1_H(0_M),0_M)$ and $b \in CF(0_M,\phi^1_H(0_M))$ for which $\mu_2(a,b) \in CF(\phi^1_H(0_M),\phi^1_H(0_M))$ is the unique maximum of a suitable Morse function. In the Legendrian case the element $\mu_2(a,b) \in CF(\phi^1(j^10),\phi^1(j^10))$ is still a homology unit; however, it not necessarily a Morse chord, but can be of significant action. In particular, multiplication with the element $\mu_2(a,b)$ is not necessarily identity on the chain level, nor is it necessarily homotopic to the identity by a chain homotopy of small action. The geometrically induced chain homotopy $\mu_3(a,b,\cdot)$ between $\mu_2(a,\mu_2(b,\cdot))$ and $\mu_2(\mu_2(a,b),\cdot)$ increases action by at most the spectral norm, and is used in\cite{Biran:Bounds} for establishing the bound on the boundary depth. However, this chain homotopy does not to the job anymore, since we also need an additional chain-homotopy (of unknown action properties) to take us from the map $\mu_2(\mu_2(a,b),\cdot)$ to the chain level identity.

\section{Background}

\subsection{Contact geometry of jet-spaces and contactisations}
\label{sec:contact}

An {\bf exact symplectic manifold} is a smooth $2n$-dimensional manifold $(X^{2n},d\lambda)$ equipped with a choice of a primitive one-form $\lambda$ for an exact symplectic two-form $\omega=d\lambda$, i.e.~$\omega$ is skew-symmetric, non-degenerate, and closed. Note that the primitive $\lambda$ should be considered as part of the data describing the exact symplectic manifold. A compact exact symplectic manifold with boundary $(W,d\lambda)$ is a {\bf Liouville domain} if the {\bf Liouville vector field}, i.e.~the vector field $\zeta$ given as the symplectic dual of $\lambda$ via the equation $\iota_\zeta d\lambda=\lambda$, is transverse to the boundary $\partial W$. The flow generated by $\zeta$ is called the {\bf Liouville flow} and satisfies $(\phi^t_\zeta)^*\lambda=e^t\lambda$. An open exact symplectic manifold $(\overline{W},d\lambda)$ is a {\bf Liouville manifold} if the all critical points of the Liouville vector field are contained inside some compact Liouville domain $W \subset (\overline{W},d\lambda)$, and if the Liouville flow is complete.

A {\bf Hamiltonian isotopy} is a smooth isotopy of $X$ which is generated by a time-dependent vector field $V_t \in \Gamma(TX)$ that satisfies $\iota_{V_t}d\lambda=-dH_t$ for some smooth time-dependent function
$$H \colon X \times \RR_t \to \RR$$
which is called the {\bf Hamiltonian}; a diffeomorphism of $X$ which is the time-$t$ flow generated by such a vector field preserves the symplectic form (but not the primitive) and is denoted by
$$\phi^t_{H} \colon (X,\omega) \to (X,\omega);$$
we call such a map a {\bf Hamiltonian diffeomorphism}, and the corresponding flow a {\bf Hamiltonian isotopy}. Conversely, any choice of Hamiltonian function induces a {\bf Hamiltonian isotopy} $\phi^t_H$ in the above manner. Since we consider exact symplectic manifolds, a smooth isotopy $\phi^t \colon X \to X$ is a Hamiltonian isotopy if and only if $(\phi^t)^*\lambda=\lambda+dG_t$ holds for some smooth function
$$G \colon X \times \RR_t \to \RR.$$
Note that the Hamiltonian function that corresponds to a Hamiltonian isotopy is determined only up to the addition of a constant.

Any exact $2n$-dimensional symplectic manifold $(X^{2n},d\lambda)$ gives rise to a $2n+1$-dimensional contact manifold $(X \times \RR_z,dz+\lambda)$ called its {\bf contactisation}, which is equipped with the canonical contact one-form $\alpha_{st}\coloneqq dz+\lambda$. The contactisations induced by choices of primitives of the symplectic form $\lambda$ and $\lambda'=\lambda+df$ that differ by the exterior differential of $f \colon X \to \RR$ are isomorphic via the coordinate change $z\mapsto z-f$. Recall that the contact condition is equivalent to $d\alpha_{st}$ being non-degenerate on the contact planes $\ker \alpha_{st} \subset T(X \times \RR)$. A {\bf contact isotopy} is a smooth isotopy which preserves the distribution $\ker \alpha_{st}$ (but not necessarily the contact form). The contraction $\iota_{V_t}\alpha_{st}$ of the contact form and the infinitesimal generator gives a bijective correspondence between contact isotopies and smooth time-dependent functions on $X \times \RR$, the latter are called {\bf contact Hamiltonians}. We refer to \cite{Geiges:Intro} for more details. 
\begin{lma}
\label{lma:lift}
A Hamiltonian isotopy $\phi^t_H \colon (X,d\lambda) \to (X,d\lambda)$ with a choice of Hamiltonian $H_t \colon X \to \RR$ lifts to a contact isotopy
\begin{gather*}
X \times \RR \to X \times \RR,\\
(x,z) \mapsto (\phi^t_H(x),z-G_t(x)),
\end{gather*}
where the function $G \colon X \times \RR_t \to \RR$ is defined by
$$G_t(x)=\int_0^t \lambda(V_s(\phi^s_{H}(x)))-H_s(\phi^s_H(x))ds$$
and satisfies the property
$$(\phi^t_{H})^*\lambda=\lambda+d\,G_t.$$
Moreover, this contact isotopy preserves the contact form $\alpha_{st}$ and is generated by the time-dependent contact Hamiltonian $H_t \circ \OP{pr}_X \colon X \times \RR_z \to \RR$.
\end{lma}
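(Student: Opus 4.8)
The plan is a direct verification, with the algebraic content concentrated in the identity $(\phi^t_H)^*\lambda = \lambda + dG_t$. First I would establish that identity: differentiating in $t$ and using Cartan's formula together with $\iota_{V_t}d\lambda = -dH_t$,
$$\frac{d}{dt}(\phi^t_H)^*\lambda = (\phi^t_H)^*\mathcal{L}_{V_t}\lambda = (\phi^t_H)^*\bigl(d\,\iota_{V_t}\lambda + \iota_{V_t}d\lambda\bigr) = d\bigl[(\phi^t_H)^*(\lambda(V_t)-H_t)\bigr].$$
Since the exterior differential commutes with integration in the parameter, and $\phi^0_H=\mathrm{id}$, integrating from $0$ to $t$ gives $(\phi^t_H)^*\lambda - \lambda = dG_t$ with $G_t(x) = \int_0^t \bigl(\lambda(V_s(\phi^s_H(x))) - H_s(\phi^s_H(x))\bigr)\,ds$, precisely the claimed formula.

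Next I would check that $\Phi^t(x,z) = (\phi^t_H(x), z - G_t(x))$ preserves $\alpha_{st}=dz+\lambda$. Writing $\pr_X$ for the projection to $X$ (so that $\lambda$ on $X\times\RR$ means $\pr_X^*\lambda$ and $G_t$ is constant in $z$), the $z$-component of $\Phi^t$ contributes $(\Phi^t)^*dz = dz - \pr_X^*dG_t$, while the $X$-component contributes $(\Phi^t)^*\lambda = \pr_X^*(\phi^t_H)^*\lambda = \pr_X^*(\lambda + dG_t)$; the two $dG_t$-terms cancel, so $(\Phi^t)^*\alpha_{st} = \alpha_{st}$. In particular $\Phi^t$ is an isotopy with $\Phi^0=\mathrm{id}$ that preserves $\ker\alpha_{st}$ — hence a contact isotopy — and it even preserves the contact form itself.

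Finally I would identify the contact Hamiltonian. Differentiating $\Phi^t$ in $t$ and using $\tfrac{d}{dt}G_t(x) = \lambda(V_t(\phi^t_H(x))) - H_t(\phi^t_H(x))$, the infinitesimal generator at the point $\Phi^t(x,z)$ equals $V_t(\phi^t_H(x)) + \bigl(H_t(\phi^t_H(x)) - \lambda(V_t(\phi^t_H(x)))\bigr)\partial_z$; since this depends only on the target point $\phi^t_H(x)$ and not on $z$, and $\Phi^t$ is a diffeomorphism, the generating vector field is $Z_t = V_t + \bigl(H_t - \lambda(V_t)\bigr)\partial_z$ on $X\times\RR$, with $V_t$ and $H_t$ pulled back along $\pr_X$. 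Therefore
$$\alpha_{st}(Z_t) = dz(Z_t) + \lambda(Z_t) = \bigl(H_t - \lambda(V_t)\bigr) + \lambda(V_t) = H_t\circ\pr_X,$$
which, under the stated correspondence between contact isotopies and contact Hamiltonians, is the assertion.

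I do not expect a genuine obstacle here: the lemma is essentially a bookkeeping exercise, and the only care required is with the sign conventions — the $-dH_t$ defining $V_t$ and the $z-G_t$ (not $z+G_t$) in the lift — and with keeping track of which forms are pulled back from which factor of $X\times\RR$. Alternatively, one could skip computing $G_t$ in advance and instead verify directly that the contact vector field of $H_t\circ\pr_X$ integrates to $(x,z)\mapsto(\phi^t_H(x), z-G_t(x))$, but the route above is the most economical.
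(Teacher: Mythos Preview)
Your proof is correct. The paper does not actually supply a proof of this lemma; it is stated as a standard background fact in the preliminaries section, with no argument given. Your direct verification --- establishing $(\phi^t_H)^*\lambda=\lambda+dG_t$ via Cartan's formula and integration, then checking $(\Phi^t)^*\alpha_{st}=\alpha_{st}$ by cancellation of the $dG_t$ terms, and finally reading off the contact Hamiltonian from the generating vector field --- is exactly the kind of routine computation the author is implicitly leaving to the reader, and all the signs and pullbacks are handled correctly.
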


A smooth immersion of an $n$-dimensional manifold
$$\Lambda \looparrowright (X^{2n} \times \RR,dz+\lambda)$$
in the contactisation is {\bf Legendrian} if it is tangent to $\ker \alpha_{st}$, while a smooth $n$-dimensional immersion $L \looparrowright (X^{2n},\lambda)$ in an exact symplectic manifold is {\bf exact Lagrangian} if $\lambda$ pulls back to an \emph{exact} one-form. The following relation between Legendrians and exact Lagrangians is immediate:
\begin{lma}
\label{lma:laglift}
The canonical projection of a Legendrian immersion to $(X,\lambda)$ is an exact Lagrangian immersion. Conversely, any exact Lagrangian immersion lifts to a Legendrian immersion of the contactisation $X \times \RR$. Moreover, the lift is uniquely determined by the choice of a primitive $f \colon L \to \RR$ of the pull-back $\lambda|_{TL}=df$, via the formula $z=-f$.
\end{lma}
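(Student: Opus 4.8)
The plan is to unwind the definitions in the coordinates adapted to the contactisation. I would write any Legendrian immersion of $\Lambda$ into $X\times\RR$ in the form $\iota=(j,z)$, where $j=\pr_X\circ\iota\colon\Lambda\looparrowright X$ is the composition with the canonical projection and $z\colon\Lambda\to\RR$ is the last coordinate, and observe that the Legendrian condition $\iota^*(dz+\lambda)=0$ reads $dz=-j^*\lambda$ on $\Lambda$. From this the first assertion is immediate: $j^*\lambda$ is exact, hence $j$ is isotropic, and the dimension count $\dim\Lambda=n=\tfrac12\dim X$ upgrades ``isotropic'' to ``Lagrangian'', with $-z$ serving as a primitive of $j^*\lambda$.

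For the converse I would run this backwards: given an exact Lagrangian immersion $j\colon L\looparrowright X$ together with a primitive $f\colon L\to\RR$ with $\lambda|_{TL}=df$, I set $\iota=(j,-f)$ and check $\iota^*(dz+\lambda)=-df+df=0$, so that $\iota$ is Legendrian and projects to $j$. Uniqueness of the lift for a fixed $f$ is then formal; more generally, any two Legendrian lifts $(j,z_1)$ and $(j,z_2)$ of the same $j$ satisfy $dz_1=dz_2=-j^*\lambda$, so $z_1-z_2$ is locally constant on $L$, which matches precisely the indeterminacy in the choice of primitive. Hence $z=-f$ sets up the asserted bijection between Legendrian lifts of $j$ and primitives of $\lambda|_{TL}$.

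The one place where a little care is needed --- the closest thing to an obstacle --- is verifying that the projection $j$ of a Legendrian \emph{immersion} is again an immersion, rather than merely an isotropic smooth map. I would argue this pointwise: if $dj_p(v)=0$ for $v\in T_p\Lambda$, then $d\iota_p(v)=(0,dz_p(v))$, while the Legendrian relation forces $dz_p(v)=-\lambda(dj_p(v))=0$, so $d\iota_p(v)=0$; since $\iota$ is an immersion this gives $v=0$. Apart from this observation, the entire statement is formal manipulation of pullbacks of one-forms, so I do not anticipate any genuine difficulty.
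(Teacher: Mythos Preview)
Your proposal is correct and follows the only natural route: unwind the Legendrian condition $\iota^*(dz+\lambda)=0$ in the product coordinates and read off both directions. The paper itself treats this lemma as immediate and gives no proof, so your argument simply spells out what the paper leaves implicit; the extra care you take in checking that the projection is again an immersion is a nice touch that the paper omits.
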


Transverse double points of Lagrangian immersions are stable. On the other hand, generic Legendrian immersions are in fact \emph{embedded}. However, there are stable self-intersections of Legendrians that appear in one-parameter families. Recall the following standard fact; again we refer to e.g.~\cite{Geiges:Intro} for details.
\begin{lma}
A compactly supported smooth isotopy $\phi^t(\Lambda) \subset X \times \RR$ through Legendrian embeddings can be generated by an ambient contact isotopy.
\end{lma}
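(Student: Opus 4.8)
The plan is to realise the given Legendrian isotopy as the flow of a compactly supported time-dependent contact Hamiltonian $H_t\colon X\times\RR\to\RR$, and the key point is the standard infinitesimal correspondence for Legendrian families: the \emph{normal} part of the velocity field of a smooth family of Legendrians is completely determined by its contraction with the contact form, which is merely a function on the moving Legendrian. Granting this, one reads off the relevant function from the given isotopy, extends it to an ambient compactly supported Hamiltonian, and checks by a uniqueness argument that the resulting contact flow carries $\Lambda_0$ onto $\Lambda_t:=\phi^t(\Lambda_0)$; compact support of $H_t$ makes the flow complete.

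First I would set up the infinitesimal correspondence. Write $\xi=\ker\alpha_{st}$ and let $j_t\colon\Lambda\hookrightarrow X\times\RR$ parametrise $\Lambda_t$, with velocity $v_t:=\tfrac{\partial}{\partial t}j_t$; since $\Lambda_t$ is Legendrian, $T\Lambda_t\subset\xi$ is Lagrangian for $d\alpha_{st}|_\xi$. Differentiating $j_t^*\alpha_{st}\equiv 0$ and applying the Cartan formula gives
$$0=\tfrac{d}{dt}j_t^*\alpha_{st}=d\big(\alpha_{st}(v_t)\circ j_t\big)+j_t^*\big(\iota_{v_t}d\alpha_{st}\big).$$
Writing $v_t=h_tR+w_t$ with $R$ the Reeb vector field, $w_t\in\xi$, and $h_t:=\alpha_{st}(v_t)$, this reads $j_t^*(\iota_{w_t}d\alpha_{st})=-d(h_t\circ j_t)$. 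Because $d\alpha_{st}$ is nondegenerate on $\xi$ and $T\Lambda_t$ is Lagrangian there, the linear map $w\mapsto (\iota_w d\alpha_{st})|_{T\Lambda_t}$ descends to an isomorphism $\xi_x/T_x\Lambda_t\xrightarrow{\ \sim\ }T^*_x\Lambda_t$. Hence $w_t$, and therefore $v_t$ modulo $T\Lambda_t$, is determined by $h_t$; conversely any $h_t$ is realised, essentially uniquely (the tangential ambiguity being a reparametrisation), by a Legendrian family with prescribed initial condition.

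Now, for the given isotopy put $h_t:=\alpha_{st}(\tfrac{d}{dt}\phi^t)|_{\Lambda_t}$, which is compactly supported since $\phi^t$ is, and choose a compactly supported function $H_t\colon X\times\RR\to\RR$, smooth in $t$, with $H_t|_{\Lambda_t}=h_t$; such an extension exists via a parametric Legendrian (Weinstein) tubular neighbourhood of the trace $\{(x,t):x\in\Lambda_t\}$. Let $\psi^t$ be the flow of the contact vector field $V_t$ with $\alpha_{st}(V_t)=H_t$; by compact support it is defined for all $t$, and it is a contact isotopy. The family $\psi^t(\Lambda_0)$ is a Legendrian family through $\Lambda_0$ whose normal velocity has $\alpha_{st}$-value $H_t|_{\psi^t(\Lambda_0)}$, while $\Lambda_t$ is a Legendrian family through $\Lambda_0$ whose normal velocity has $\alpha_{st}$-value $h_t=H_t|_{\Lambda_t}$. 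By the correspondence above these are two solutions of the same initial value problem for Legendrian families, so uniqueness forces $\psi^t(\Lambda_0)=\Lambda_t$ for all $t$. If one additionally wants matching parametrisations, post-compose $\psi^t$ with the ambient contact isotopy induced, via the same neighbourhood theorem, by an appropriate isotopy of $\Lambda$.

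The main obstacle is the analytic bookkeeping around the uniqueness step: in a jet-neighbourhood chart a Legendrian family is recorded by a family of generating functions $f_t$ solving a genuine first-order PDE, and one must either quote well-posedness of this flow or, more elementarily, run the whole construction locally in time on a finite cover of the (compact) parameter interval and concatenate the local contact isotopies, shrinking supports so the concatenation stays compactly supported. None of this is new: the statement is the classical Legendrian isotopy extension theorem, and full details are in \cite{Geiges:Intro}.
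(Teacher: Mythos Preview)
Your outline is correct and is precisely the standard proof of the Legendrian isotopy extension theorem. The paper, however, gives no proof at all: it simply records the lemma as a ``standard fact'' and refers to \cite{Geiges:Intro}. So there is nothing to compare on the paper's side beyond the same citation you already make at the end.

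One small remark on your uniqueness step: rather than invoking well-posedness of a PDE for Legendrian families, it is cleaner to argue pointwise. Along $\Lambda_t$ your contact vector field $V_t$ satisfies $\alpha_{st}(V_t)=H_t=h_t=\alpha_{st}(v_t)$ and, restricting to $T\Lambda_t$, both $\iota_{V_t}d\alpha_{st}$ and $\iota_{v_t}d\alpha_{st}$ equal $-d(h_t)$; since $T\Lambda_t\subset\xi$ is Lagrangian for $d\alpha_{st}|_\xi$, this forces $V_t-v_t\in T\Lambda_t$. Then $(\psi^t)^{-1}(\Lambda_t)$ has velocity tangent to itself, hence is constant and equal to $\Lambda_0$, giving $\psi^t(\Lambda_0)=\Lambda_t$ directly. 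This sidesteps the ``analytic bookkeeping'' you flag, and is essentially how \cite{Geiges:Intro} does it.
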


\subsubsection{The cotangent bundle and jet-space}

There is a canonical exact symplectic two-form $-d(p\,dq)$ on any smooth cotangent bundle $T^*M$, whose primitive $-p\,dq$ is the tautological one-form with a minus sign. The cotangent bundle is a Liouville manifold and any co-disc bundle is a Liouville domain. The zero-section $0_M \subset T^*M$ is obviously an exact Lagrangian embedding.

The contactisation of $T^*M$ is the one-jet space $J^1M=T^*M \times \RR_z$ with the canonical contact one-form $dz-p\,dq$. The zero-section in $T^*M$ lifts to the one jet $j^1c$ of any constant function $c$ (obviously the one jet $j^1f$ of an arbitrary function $f \colon M \to \RR$ is Legendrian isotopic to $j^10$). For us the most relevant example is actually the two-dimensional symplectic cotangent bundle $T^*S^1=S^1 \times \RR_p$ equipped with the exact symplectic two-form $-d(p\,d\theta)$, and its corresponding contactisation, i.e.~the three-dimensional contact manifold
$$(J^1S^1=T^*S^1 \times \RR_z,dz-p\,d\theta)$$
(note the sign convention for the Liouville form). 

In order to describe Legendrians in $J^1M$ we will make use of the {\bf front-projection}, by which one simply means the canonical projection
$$\Pi_F \colon J^1M \to M \times \RR_z.$$
A Legendrian immersion can be uniquely determined by its post-composition with the front projection. A generic Legendrian knot in $J^1S^1$ has a front projection whose singular locus consists of
\begin{itemize}
\item non-vertical cubical cusps and
\item transverse self-intersections.
\end{itemize}
On the other hand, note that the front projection has no vertical tangencies by the Legendrian condition.

Two sheets of the front projection that have the same slopes (i.e.~$p$-coordinates) above some given point in the base, project to a double-point inside $T^*M$. There is a bijection between double points of this projection and Reeb chords, where a Reeb chord is an integral curve of $\partial_z$ with both endpoints on the Legendrian. The difference of $z$-coordinate of the endpoint and starting point of a Reeb chord $c$ is called its {\bf length} and is denoted by $\ell(c)\ge 0$.

Double-points of the Legendrian the immersion itself corresponds to self-tangencies of the front projection. This is not a stable phenomenon, and double-points of Legendrians generically occur only in one-parameter families. These double-points can be considered as Reeb chords of length zero.

Two Legendrian knots inside $J^1\RR$ or $J^1S^1$ with generic fronts are Legendrian isotopic if and only if their front projections can be related by a sequence of \emph{Legendrian Reidemeister moves} \cite{LegendrianReidemeister} together with an ambient isotopy of the front inside $S^1 \times \RR_z$; see \cite{Etnyre:Legendrian} for an introduction to Legendrian knots. 

\begin{figure}[htp]
	\vspace{3mm}
	\labellist
	\pinlabel $z$ at 4 82
	\pinlabel $z$ at 117 82
	\pinlabel $RI$ at 94 50
	\pinlabel $x$ at 82 4
	\pinlabel $x$ at 195 4
	\endlabellist
	\includegraphics{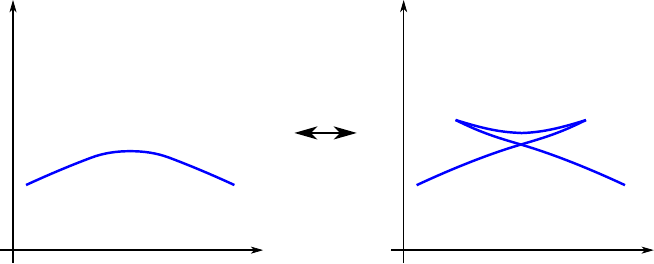}
	\caption{\emph{RI}: The first Legendrian Reidemeister move in the front projection.}
	\label{fig:r1}
\end{figure}
\begin{figure}[htp]
	\vspace{3mm}
	\labellist
	\pinlabel $z$ at 4 82
	\pinlabel $z$ at 117 82
	\pinlabel $RII$ at 94 50
	\pinlabel $x$ at 82 4
	\pinlabel $x$ at 195 4
	\endlabellist
	\includegraphics{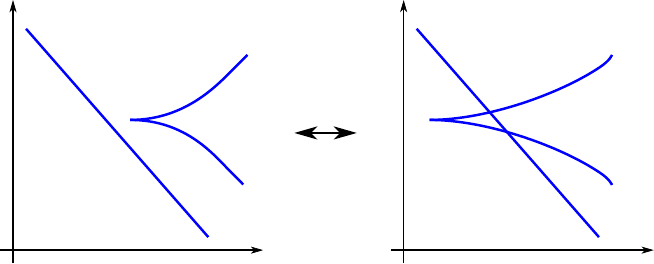}
	\caption{\emph{RII}: The second Legendrian Reidemeister move in the front projection.}
	\label{fig:r2}
\end{figure}

\begin{figure}[htp]
	\vspace{3mm}
	\labellist
	\pinlabel $z$ at 4 82
	\pinlabel $z$ at 117 82
	\pinlabel $RIII$ at 93 50
	\pinlabel $x$ at 82 4
	\pinlabel $x$ at 195 4
	\endlabellist
	\includegraphics{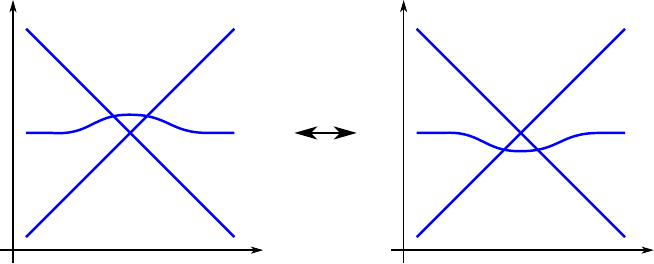}
	\caption{\emph{RIII}: The third Legendrian Reidemeister move in the front projection.}
	\label{fig:r3}
\end{figure}

For convenience we will also introduce a composite move that we will make repeated use of; this is the one shown in Figure \ref{fig:r}, which involves taking two cusp edges with different slopes, and making them cross each other (it is important that the cusps have different slopes).

\begin{figure}[htp]
	\vspace{3mm}
	\labellist
	\pinlabel $z$ at 4 82
	\pinlabel $z$ at 117 82
	\pinlabel $2\times RII$ at 94 50
	\pinlabel $x$ at 82 4
	\pinlabel $x$ at 195 4
	\endlabellist
	\includegraphics{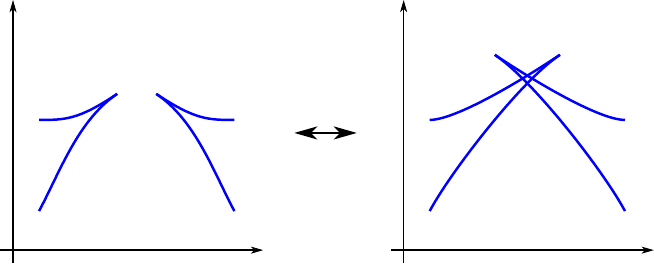}
	\caption{A composite move: the front to the right is obtained by performing two consecutive \emph{RII}-moves on the front to the left together with an isotopy.}
	\label{fig:r}
\end{figure}

\subsubsection{The punctured torus}
\label{sec:torus}
Here we construct an example of a two-dimensional non-planar Liouville domain: the two torus minus an open ball, which we denote by $(\Sigma_{1,1},d\lambda).$

First, consider the primitive
$$\lambda_0=\frac{1}{2}(p\,dq-q\,dp)$$
of the standard linear symplectic form $dp\wedge dq$ on $\RR^2.$ We have the identities
\begin{align*}
& \lambda_0-d\left(\frac{pq}{2}\right)=-q\,dp,\\
& \lambda_0+d\left(\frac{pq}{2}\right)=p\,dq.
\end{align*}
Take a smooth function $\sigma \colon \RR^2 \to \RR$ which in the standard coordinates labelled by $(p,q) \in \RR^2$ is given by
\begin{itemize}
\item $\sigma(p,q)=-pq/2$ on $\{|q| \le 1,|p|>2\}$, while it is of the form $-g(p)q/2$ for some smooth function $g$ with $g(p),g'(p) \ge 0$ on $\{|q| \le 1, |p| \ge 1\}$;
\item $\sigma(p,q)=pq/2$ on $\{|q|>2,|p| \le 1\}$, while it is of the form $g(q)p/2$ for some smooth function $g$ with $g(q),g'(q) \ge 0$ on $\{|q| \le 1, |p| \ge 1\}$;
\item $\sigma(p,q)=0$ on $\{|q|<1,|p|<1\}$; and
\end{itemize}
Consider the exact symplectic manifold $(X,d\lambda)$ which is obtained by taking the cross-shaped domain
$$ \{ p \in [-2,2], q \in [-1,1] \} \cup \{ q \in [-2,2], p \in [-1,1] \} \subset \RR^2$$
and identifying $\{p=2\}$ with $\{p=-2\}$, and $\{q=-2\}$ with $\{q=2\}$ in the obvious manner. Topologically the result is a punctured torus. The Liouville form $\lambda_0+d\sigma$ on $\RR^2$ extends to a Liouville form $\lambda$ on this punctured torus. The punctured torus has a skeleton $Sk \subset X$ which is the image of the cross $\{pq=0\}$ under the quotient; in other words, $Sk \subset X$ is the union of two smooth Lagrangian circles that intersect transversely in a single point. Note that
$$ Sk=\bigcap_{T=1}^\infty \phi^{-T}(X).$$
We claim that the sought Liouville domain $(\Sigma_{1,1},d\lambda)$ can be realised as a suitable subset of this exact symplectic manifold, simply by smoothing its corners; see Figure \ref{fig:t1}.

Since $(\Sigma_{1,1},\lambda)$ is surface with non-empty boundary, it admits a symplectic trivialisation of its tangent bundle. This implies that the all Lagrangian submanifolds of $\Sigma_{1,1}$ have a well-defined Maslov class; see Section \ref{sec:Maslov} for more details. We will make heavy use of the fact that the Maslov class depends on the choice of a symplectic trivialisation; in this case, symplectic trivialisations up to homotopy can be identified with homotopy classes of maps
$$\Sigma_{1,1} \to S^1$$
i.e.~cohomology classes $H^1(\Sigma_{1,1};\ZZ).$

\subsection{Barcode of a filtered complex and notions from spectral invariants}
\label{sec:barcode}

A {\bf filtered complexes} over some field $\kk$ is a chain complex $(C,\partial,\mathfrak{a})$ in which each element is endowed with an action $\mathfrak{a}(c) \in \RR \sqcup \{-\infty\}$ and such that the following properties are satisfied: 
\begin{itemize}
\item $\mathfrak{a}(c)=-\infty$ if and only if $c=0$,
\item $\mathfrak{a}(r\cdot c)=\mathfrak{a}(c)$ for any $r \in \kk^*$,
\item $\mathfrak{a}(a+b) \le \max\{\mathfrak{a}(a),\mathfrak{a}(b)\}$, and
\item $\mathfrak{a}(\partial(a))<\mathfrak{a}(a)$ for any $a \neq 0$.
\end{itemize}
The subset 
$$C^{<a}=\mathfrak{a}^{-1}(\{-\infty\} \cup (-\infty,a))$$
is a $\kk$-subspace by the first three bullet points; this subspace is a subcomplex by the last bullet point.  

We say that a basis $\{e_i\}$ is {\bf compatible} with the filtration, if the action of a general element $c \in C$ is given by
\begin{equation}
\label{eq:compatible}
\mathfrak{a}(r_1e_1+\ldots+r_ne_n) = \max\{\mathfrak{a}(e_i); \: r_1 \neq 0  \}, \:\:\: r_i \in \kk.
\end{equation}
Such a bases always exist for any filtered complex by a result due to Barannikov \cite{Barannikov}; also see \cite[Lemma 2.2]{Dimitroglou:Persistence}. (For a general basis one would have to replace the equality "$=$" in Formula \eqref{eq:compatible} with an \emph{inequality} "$\le$".)

Given a basis with a specified action on each basis element, one can also use the above formula to \emph{construct} a filtration on the entire complex, under the assumption that the differential decreases action. The Floer complexes described below get endowed with filtrations in precisely this manner, i.e.~by specifying an action for each canonical and geometrically induced basis element.

To every complex of vector spaces equipped with a filtration there is a notion of a {\bf barcode}; we refer to \cite[Section 2]{Dimitroglou:Persistence} for the details of the presentation that we rely on here. The barcode is a set of intervals of the form $[a,b)$ and $[a,+\infty)$, where $a,b\in \RR$, where we allow multiplicities. Instead of giving the usual definition of the barcode, we give it the following alternative characterisation.
\begin{lma}[Lemma 2.6 in \cite{Dimitroglou:Persistence}]
The barcode can be recovered from the following data:
\begin{enumerate}
\item For any basis which is compatible with the action filtration, there is a bijection between the set of actions of basis elements and the union of start and endpoints of bars (counted with multiplicities).
\item For any two numbers $a<b$, the number of bars of $C_*$ whose endpoints $e$ satisfy $e \in (b,+\infty]$ and starting points $s$ satisfy $s \in [a,b)$ is equal to $\dim H(C^{<b}/C^{<a})$.
\end{enumerate}
\end{lma}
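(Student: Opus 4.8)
The plan is to work entirely with a \emph{normal form} of the filtered complex in the sense of Barannikov \cite{Barannikov} (see also \cite[Lemma~2.2]{Dimitroglou:Persistence}). So I would first fix a basis $\{e_i\}$ compatible with the action together with a partial matching of its index set such that $\partial e_j=e_i$ for every matched pair $(i,j)$ — note that the filtration axiom then forces $\mathfrak{a}(e_i)=\mathfrak{a}(\partial e_j)<\mathfrak{a}(e_j)$ — and $\partial e_i=0$ for every unmatched index $i$. By definition the barcode then consists of the finite bars $[\mathfrak{a}(e_i),\mathfrak{a}(e_j))$ coming from the matched pairs together with the semi-infinite bars $[\mathfrak{a}(e_i),+\infty)$ coming from the unmatched indices, and I take the independence of this multiset of the chosen normal form as known, as in \cite[Section~2]{Dimitroglou:Persistence}. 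The two items then reduce to bookkeeping in such a basis.

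Item~(1) is tautological for the normal-form basis itself, where the multiset $\{\mathfrak{a}(e_i)\}$ of actions \emph{is} the multiset of endpoints of bars — each matched pair contributes one start and one end, each unmatched index one start. To pass to an \emph{arbitrary} compatible basis I would note that the multiset of actions does not depend on the compatible basis at all: for each value $c$, the number of basis vectors with $\mathfrak{a}(e_i)=c$ equals $\dim_{\kk}\big(\mathfrak{a}^{-1}(\{-\infty\}\cup(-\infty,c])\,/\,C^{<c}\big)$, since a compatible basis descends to a basis of each associated-graded piece, and this dimension is intrinsic.

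For item~(2) I would compute $H(C^{<b}/C^{<a})$ in the normal-form basis: the quotient complex has the images of the $e_i$ with $\mathfrak{a}(e_i)\in[a,b)$ as a basis with induced differential, and running through the matched pairs and unmatched indices one checks that a generator survives to homology exactly when the corresponding bar straddles the window in the stated sense, so that $\dim H(C^{<b}/C^{<a})$ equals the claimed count. In parallel I would observe that letting $a\to-\infty$ collapses $C^{<a}$ to the zero complex, so (2) already contains the numbers $\dim H(C^{<b})$ for all $b$; feeding these into the long exact sequence of the pair $(C^{<b},C^{<a})$ recovers the rank of every structure map $H(C^{<a})\to H(C^{<b})$, and by the classical fact that a persistence module is determined by its rank function this already pins down the barcode. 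Combining the two items also recovers it more explicitly: (1) provides the finite list of values that can occur as endpoints, and (2), by a finite inclusion--exclusion over those values, provides the multiplicity of each finite bar $[v,w)$ and hence, by subtracting against (1), of each semi-infinite bar $[v,+\infty)$.

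The genuinely fiddly part, I expect, is this last disentangling: separating, from the numbers furnished by (1) and (2), the pairings of endpoints into finite bars from the endpoints that open semi-infinite bars, while keeping all the half-open conventions consistent — the intervals $[s,b)$ and $(b,+\infty]$ appearing in (2), and the strict inequalities defining $C^{<a}$ — so that each endpoint is attributed correctly. Verifying that (1) and (2) hold for the barcode in the first place is, by contrast, a routine computation in the normal-form basis, and this is the only place where Barannikov's theorem is really needed.
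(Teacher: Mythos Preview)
The paper does not actually prove this lemma: it is stated with the attribution ``Lemma~2.6 in \cite{Dimitroglou:Persistence}'' and no proof is given here, so there is nothing in the present paper to compare your argument against. Your approach via Barannikov normal form is the standard one and is essentially correct; the verification of~(2) by case analysis on matched/unmatched basis elements in the window $[a,b)$ is exactly how one proves this, and your observation that the multiset of actions of a compatible basis is intrinsic (via the associated-graded pieces) is the right way to handle~(1) for an arbitrary compatible basis. The recovery argument you sketch --- either via the rank function determining the persistence module, or by inclusion--exclusion over the finite set of endpoint values supplied by~(1) --- is also correct, and your identification of the bookkeeping with half-open conventions as the fiddly part is accurate.
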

\begin{cor}
\label{cor:depth}
Assume that the barcode contains a finite bar $[a,b)$. Then, for any compatible basis $\{e_i\}$, we can deduce the existence of basis elements $e_i$ and $e_j$ with $\mathfrak{a}(e_i)=b$, $\mathfrak{a}(e_j)=b$, and for which $\langle \partial e_i,e_j \rangle \neq 0$.

Conversely, if there exists a compatible basis $\{e_i\}$ for which $\partial e_i = r e_j$ for some coefficient $r \neq 0$, then the barcode contains the finite bar $[\mathfrak{a}(e_j),\mathfrak{a}(e_i))$.
\end{cor}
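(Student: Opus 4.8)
The plan is to read off both implications from the characterisation of the barcode in the lemma above, using as organising principle Barannikov's normal form \cite{Barannikov}: one may always choose a compatible basis in which $\partial$ is a partial matching, i.e. for each basis element either $\partial e_i=0$ or $\partial e_i=r_i e_{\sigma(i)}$ with $r_i\neq 0$, the partial map $\sigma$ injective, and $\mathfrak{a}(e_{\sigma(i)})<\mathfrak{a}(e_i)$; in such a basis the finite bars are exactly the intervals $[\mathfrak{a}(e_{\sigma(i)}),\mathfrak{a}(e_i))$ and the infinite bars the half-lines attached to the unmatched cycles. The two parts of the lemma — part (1), that the multiset of actions of basis elements equals the multiset of start- and endpoints of bars, and part (2), the dimension formula for $H(C^{<b}/C^{<a})$ — then let one translate between this normal form and an arbitrary compatible basis.

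For the forward implication I would fix an arbitrary compatible basis $\{e_i\}$ and a finite bar $[a,b)$. Part (1) already supplies a basis element of action $a$ and one of action $b$; the content is the nonzero entry of $\partial$ joining these two levels, which I would produce by contradiction. Choose $a'$ just below $a$ with no basis action in $[a',a)$, let $C^{\le b}$ be the subcomplex of action $\le b$, and consider the short exact sequence $0\to C^{<b}/C^{<a'}\to C^{\le b}/C^{<a'}\to V\to 0$, where $V=C^{\le b}/C^{<b}$ has vanishing differential and is spanned by the classes of the action-$b$ basis elements; the connecting map $\delta\colon V\to H_{*-1}(C^{<b}/C^{<a'})$ of the associated long exact sequence is induced by $\partial$. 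Unwinding part (2), the bar $[a,b)$ corresponds to a class of $H(C^{<b}/C^{<a'})$ that is nonzero, is born at $a$ (hence represented at action exactly $a$), and dies at $b$ (hence lies in $\im\delta$). Chasing a preimage under $\delta$ and comparing actions then forces some $e_i$ with $\mathfrak{a}(e_i)=b$ to have a nonzero action-$a$ component in $\partial e_i$, i.e. $\langle\partial e_i,e_j\rangle\neq 0$ for some $e_j$ with $\mathfrak{a}(e_j)=a$, contradicting the assumption that all such entries vanish. The only delicate step here is the action bookkeeping in the chase (that a class of $\partial e_i$ having action exactly $a$ in the subquotient forces a genuine action-$a$ term of $\partial e_i$), and this is routine given the compatibility of the basis.

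For the converse, suppose a compatible basis satisfies $\partial e_i=r e_j$ with $r\neq 0$. Then $\partial e_j=r^{-1}\partial^2 e_i=0$, so $e_j$ is a cycle, and $e_j=\partial(r^{-1}e_i)$ shows it is a boundary in $C^{<s}$ for all $s>\mathfrak{a}(e_i)$. I would then want part (2) of the lemma to turn this into the statement that $[e_j]$ is nonzero in $H(C^{<s})$ precisely for $\mathfrak{a}(e_j)<s\le\mathfrak{a}(e_i)$, which is exactly the assertion that $[\mathfrak{a}(e_j),\mathfrak{a}(e_i))$ appears in the barcode. The hard part will be pinning the starting point down to exactly $\mathfrak{a}(e_j)$ rather than something smaller: this requires that $[e_j]$ is not already a boundary in $C^{<s}$ for some $s\le\mathfrak{a}(e_i)$, equivalently that $e_i$ is a least-action basis element whose boundary has a nonzero $e_j$-component. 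This holds automatically when the basis is in the normal form above (the pair $(e_i,e_j)$ is then matched), and it is the condition one must keep in mind when applying the corollary to a concrete basis. Granting it, the birth of $[e_j]$ at $\mathfrak{a}(e_j)$ and its death at $\mathfrak{a}(e_i)$ follow by running the $\delta$-chase of the forward direction in reverse.
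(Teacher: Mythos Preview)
The paper offers no proof for this corollary; it is simply asserted after the lemma. Your argument is therefore already more detailed than anything in the paper. Unfortunately, the gaps you sense are real: as literally stated, \emph{both} directions of the corollary are false, and your proof breaks precisely at those points.

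For the converse, take the complex with compatible (normal-form) basis $g_1,g_2,g_3$ of actions $1,2,3$ and $\partial g_2=g_1$, $\partial g_3=0$; the barcode is $\{[1,2),\,[3,\infty)\}$. In the compatible basis $f_1=g_1$, $f_2=g_2$, $f_3=g_2+g_3$ one has $\partial f_3=f_1$, yet $[\mathfrak a(f_1),\mathfrak a(f_3))=[1,3)$ is not a bar. Your worry about ``pinning down the starting point'' is thus justified but understated: here even the weaker conclusion $\beta\ge\mathfrak a(e_i)-\mathfrak a(e_j)$ fails, since $\beta=1<2$. The condition you propose to ``grant'' genuinely does not hold in general.

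For the forward direction, take $g_1,\dots,g_6$ of actions $1,\dots,6$ with $\partial g_6=g_1$, $\partial g_5=g_2$, $\partial g_4=g_3$; the bars are $[1,6),[2,5),[3,4)$. In the compatible basis with $f_2=g_1+g_2$, $f_6=g_5+g_6$ (others $f_k=g_k$) one computes $\partial f_6=f_2$. For the bar $[1,6)$, the unique action-$6$ element is $f_6$ and the unique action-$1$ element is $f_1$, but $\langle\partial f_6,f_1\rangle=0$. The step you dismiss as ``routine'' --- that a class born at action $a$ in the image of $\delta$ forces an action-$a$ component of $\partial e_i$ --- is exactly what fails: $\delta([f_6])=[f_2]=[f_1]$ in $H(C^{<6})$ is indeed born at action $1$, but the representative $\partial f_6=f_2$ has no action-$1$ term.

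The paper's only use of the converse is in Part~(2) of Lemma~\ref{lma:spectralnormcomp}, where the degree hypotheses (unique generators $c,d$ in degrees $i,i-1$; nothing in degrees $i+1,i-2$) force $\operatorname{span}(c,d)$ to split off as a filtered direct summand of the graded complex. In that situation $[\mathfrak a(d),\mathfrak a(c))$ genuinely is a bar, so the intended application survives; but the corollary as stated needs such an extra hypothesis.
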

\begin{rmk}
It is important that the barcode considered here does not depend on the grading in any way. An efficient way to deduce properties of the barcode is nonetheless to find a grading for the compatible basis which makes the differential an operation of degree $-1$. This imposes restrictions on the differential, which in view of the previous corollary imposes restrictions on the barcode. This technique will be crucial when studying our examples.
\end{rmk}

For a filtered complex as above we can associate the following important notions.
\begin{dfn}
\label{dfn:main}
\begin{enumerate}
\item The {\em spectral range} $\rho(C,\partial,\mathfrak{a}) \in \{-\infty\} \cup [0,+\infty]$ is the supremum of the distances between starting points of the semi-infinite bars in the barcode.
\item The {\em boundary depth} $\beta(C,\partial,\mathfrak{a}) \in \{-\infty\} \cup [0,+\infty]$ is supremum of the lengths of the finite bars in the barcode.
\end{enumerate}
\end{dfn}

An important feature of the barcode is that remains invariant under simple bifurcations of the complex, i.e.~action preserving handle-slides and birth/deaths. Legendrian isotopies induce one-parameter families of the Floer complex considered here, which undergoes bifurcations of precisely this type; hence the corresponding barcode undergoes continuous deformations under Legendrian isotopies. Since this property will not be needed, we do not give more details here, but instead direct the interested reader to \cite{Dimitroglou:Persistence}.

\subsection{Floer theory in the setting of exact Lagrangians and Legendrians}
\label{sec:floer}

Floer homology for pairs $(L_0,L_1)$ of closed exact Lagrangian submanifolds of cotangent bundles were originally defined by Floer \cite{Floer:Morse}. For any such pair one obtains the Floer chain complex $CF(L_0,L_1)$ with a basis given by the intersections $L_0 \cap L_1$, which here are assumed to be transverse. Floer also showed that the homology of the complex -- the so-called Floer homology $HF(L_0,L_1)$ -- is invariant under Hamiltonian isotopy of either Lagrangian $L_i$. Moreover, in the case when $L_1$ is a $C^1$-small Hamiltonian perturbation of $L_0$ the Floer complex $CF(L_0,L_1)=C^{\mathrm{Morse}}(f)$ is the Morse complex for a $C^1$-small Morse function $f \colon L_0 \to \RR$. (This is no longer true for the Floer homology of Legendrians; see Section \ref{sec:why}.)

Nowadays there are several different techniques available for constructing Floer homology. Here we will consider the setting of Legendrian submanifolds of contactisations $(\overline{W} \times \RR,\alpha_{st})$ of a Liouville manifold $(\overline{W},d\lambda)$, in which Floer homology associates a chain complex $CF(\Lambda_0,\Lambda_1)$ to a pair of Legendrian submanifolds equipped with additional data. In this case, the homology of the complex is invariant under Legendrian isotopy of either Legendrian $\Lambda_i$. This is the version that we will use also in the case of exact Lagrangian embeddings in $(\overline{W},d\lambda).$ To that end, recall that exact Lagrangians admit lifts to Legendrians by Lemma \ref{lma:laglift}, and that a Hamiltonian of the Lagrangian induces a Legendrian isotopy of the Legendrian lift by Lemma \ref{lma:lift}.

In the case when $\overline{W}=T^*M$, and thus $\overline{W} \times \RR =J^1M$, in  \cite{Zapolsky:Jet} Zapolsky relied on Floer homology defined using the theory of generating families due to Chekanov \cite{Chekanov:Generating} in order to define spectral invariants. Since we will work with contactisations that are more general than jet-spaces, we instead follow the techniques from \cite{DualityLeg} by Ekholm--Etnyre--Sabloff, where the Floer chain complex is constructed as the linearised Legendrian contact-homology complex associated to the Chekanov--Eliashberg algebra \cite{DiffAlg}, \cite{ContHomP}.

First we outline the general set-up Floer homology in this setting, which applies equally well to either the version used here or the version defined by using generating families (when applicable). Given a pair of Legendrians $\Lambda_0,\Lambda_1\subset \overline{W} \times \RR$, equipped with additional data denoted by $\varepsilon_i$ to be specified below (in the version defined using generating families, this additional data is simply the choice of a generating family), one obtains a graded (grading is in $\ZZ$ or $\ZZ/\mu\ZZ$ depending on the Maslov class as described in Section \ref{sec:Maslov}) filtered chain complex
$$(CF_*((\Lambda_0,\varepsilon_0),(\Lambda_1,\varepsilon_1)),\partial,\mathfrak{a})$$
with a canonical compatible basis as a $\kk$-vector space given by the
\begin{itemize}
\item Reeb chords $c$ from $\Lambda_0$ to $\Lambda_1$ of action $\mathfrak{a}(c)=\ell(c)$ equal to the Reeb chord length; together with the
\item Reeb chords $c$ from $\Lambda_1$ to $\Lambda_0$ of action $\mathfrak{a}(c)=-\ell(c)$ equal to minus the Reeb chord length.
\end{itemize}
We assume that all Reeb chords are transversely cut out, and hence that they form a discrete subset, which thus is finite whenever the Legendrians are closed. With our conventions the differential is \emph{strictly action decreasing and of degree $-1$.}

The Floer complex satisfies the following important properties; see \cite{DualityLeg} for details.
\begin{itemize}
\item A Legendrian isotopy of the Legendrian $\Lambda_i$ induces a canonical continuation of the additional data $\varepsilon_i$, and the resulting one-parameter family of Floer complexes undergoes only simple bifurcations, i.e.~handle-slides and births/deaths. In particular, the homology of the complex is not changed under such a deformation.
\item In the case when $\Lambda \subset \overline{W} \times \RR$ has no Reeb chords (i.e.~it is the lift of an exact Lagrangian embedding), and when $\Lambda'$ is a $C^1$-small Legendrian perturbation, then the induced Floer complex
$$(CF((\Lambda,\varepsilon),(\Lambda',\varepsilon')),\partial,\mathfrak{a})=C^{\mathrm{Morse}}(f;\kk)$$
is the Morse homology complex of some $C^1$-small Morse function $f \colon \Lambda \to \RR$.
\end{itemize}
Again we refer to Section \ref{sec:why} for a description of the complex under the presence of pure Reeb chords; in this case the Morse complex is only realised as a quotient complex of a subcomplex.

\subsubsection{Floer complex as the linearised Chekanov--Eliasbherg algebra}
Here we relevant technical details for the particular construction of Floer homology used here, i.e.~relying on the Chekanov--Eliashberg algebra for Legendrians in contactisation from \cite{ContHomP}.

Assume that $\Lambda_0,\Lambda_1 \subset \overline{W} \times \RR$ are two Legendrian submanifolds. Further, assume that the Chekanov--Eliashberg algebras of $\Lambda_i$ admit augmentations
$$\varepsilon_i \colon (\mathcal{A}(\Lambda_i),\partial) \to \kk;$$
recall that the Chekanov--Eliasbherg algebra is a unital DGA generated by the Reeb chords of the Legendrian, and that an augmentation is a unital DGA morphism to the ground field. In particular, when the Legendrian $\Lambda_i$ has no Reeb chords, the Chekanov--Eliashberg algebra takes the simple form $\mathcal{A}(\Lambda_i)=\kk,$ and there is a canonical augmentation. An important property of augmentations is that they can be pushed forward under a Legendrian isotopy; see e.g.~\cite{DiffAlg} and \cite{Dimitroglou:Cthulhu}.

Typically one wants more additional data than just an augmentation. For instance, in order to use coefficients in a field of characteristic different from two, one also needs to fix the choice of a spin structure on both Legendrians $\Lambda_i$. In order to endow the Floer complex a $\ZZ$-grading, we need to specify a Maslov potential; we refer to Subsection \ref{sec:Maslov} for more details concerning the grading, which will play an important role for us.

The Floer complex
$$ CF((\Lambda_0,\varepsilon_0),(\Lambda_1,\varepsilon_1)) $$
is generated by the chords that have one endpoint on $\Lambda_0$ and one endpoint on $\Lambda_1$ (either being a starting point). These Reeb chords on $\Lambda_0 \cup \Lambda_1$ are called the {\bf mixed} Reeb chords. In order to define the differential, we will identify the above vector space with the underlying vector space linearised Legendrian contact homology complex of the link $\Lambda_0 \cup \phi^T_{\partial_z}(\Lambda_1)$, where the latter is the $\kk$-vector space is generated by all Reeb chords that start on $\Lambda_0$ and end on the translation $\phi^T_{\partial_z}(\Lambda_1)$ of $\Lambda_1$ in the positive $z$-direction. Note that the mixed chords on $\Lambda_0 \cup \Lambda_1$ are in bijective correspondence with the mixed chords on $\Lambda_0 \cup \phi^T_{\partial_z}(\Lambda_1)$. Here we require that $T \gg 0$ has been chosen sufficiently large, so that no chord starts on $\phi^T_{\partial_z}(\Lambda_1)$ and ends on $\Lambda_0$. Of course, the length of a mixed chord $c$ above depend on the parameter $T$ and will not be equal to the action $\mathfrak{a}(c)$ defined above; the relation between action and length is precisely
$$\ell(c)=\mathfrak{a}(c)+T.$$
The remaining Reeb chords on the link $\Lambda_0 \cup \phi^T_{\partial_z}(\Lambda_1)$ have both endpoints either on $\Lambda_0$ or $\phi^T_{\partial_z}(\Lambda_1)$, and are called {\bf pure}. Note that the Reeb chords on $\phi^T_{\partial_z}(\Lambda_1)$ are in bijective correspondence with those of $\Lambda_1$. In fact, their Chekanov--Eliashberg algebras are even canonically isomorphic.

The differential is the Linearised Legendrian contact homology differential induced by a choice of almost complex structure, together with the augmentations $\varepsilon_i$ for the Chekanov--Eliashberg algebras $\mathcal{A}(\Lambda_i)$ generated by the pure chords. This version of a Floer complex defined via the Chekanov--Eliashberg algebra was originally considered in \cite{DualityLeg}; also see \cite{Dimitroglou:Cthulhu} for a more recent realisation. We now give a sketch of the definition of the differential. It is roughly speaking defined by counts of rigid pseudoholomorphic discs in $\overline{W}$ with
\begin{itemize}
\item boundary on the Lagrangian immersion $\Pi(\Lambda_0 \cup \phi^T_{\partial_z}(\Lambda_1)) \subset \overline{W}$;
\item precisely one positive puncture at a double point which corresponds to a mixed chord -- this is the input;
\item precisely one negative puncture at a double point which corresponds to a mixed chord -- this is the output; and
\item several additional negative punctures at double points which correspond to pure chords.
\end{itemize}
When counting the strip, one weights the count by the value of the augmentation $\varepsilon_i$ on the latter pure chords. This is a part of the so-called linearised differential induced by the augmentation, as defined in \cite{DiffAlg}; also see the notion of the bilinearised Legendrian contact homology as defined by Bourgeois--Chantraine in \cite{Bilinearised}.

From positivity of symplectic area of such pseudoholomorphic discs together with Stokes' theorem one obtains that the Reeb chord length of the input chord must be larger than the Reeb chord of the output. In other words, the complex is filtered in the precise sense defined in Section \ref{sec:barcode}.

From the index formula for the expected dimension of a pseudoholomorphic discs, it follows that the degree of the input is one greater than the degree of the output; i.e.~the differential is of degree $-1$.

\subsubsection{Maslov potential and grading}
\label{sec:Maslov}

In order to define grading in Lagrangian Floer homology the technique of Maslov potentials is useful. The construction of a Maslov potential is originally is due to Seidel \cite{Seidel:Graded}. If the Maslov potential is defined then the grading is well-defined in $\ZZ$, in general the potential is only well-defined modulo the Maslov number $\mu \in \ZZ$ (the generator of the subgroup of $\ZZ$ which is the image of the Maslov class) and in that case the grading is only defined in $\ZZ/\mu\ZZ$. In any case the differential is of degree $-1$ with our conventions (i.e.~it decreases the grading).

Assume that $\overline{W}$ has vanishing first Chern class; this is e.g.~the case when $\overline{W}$ has a symplectic trivialisation, which is automatic when $\dim_{\RR} \overline{W}=2$. The $\ZZ$--grading of the generators is defined as follows. 

First, one makes the choice of a trivialisation of the determinant bundle
$$\CC^* \to \det_{\CC} T\overline{W} \to \overline{W}$$
induced by some choice of a compatible almost complex structure. There is an induced bundle with fibre
$$\CC^*/{\sim}=(\RR^2\setminus \{0\})/{\sim}=\RR P^1 \cong S^1$$
which admits the lift to an affine $\RR$-bundle via the universal cover $\RR \to S^1.$

Second, one makes the choice of a Maslov potential for each of the Legendrians $\Lambda_i$; this is the lift of the canonically defined section $\det_{\RR}T\Pi(\Lambda_i)/{\sim}$ inside the above $S^1$-bundle to the associated $\RR$-bundle. Recall the a non-zero Maslov class is the obstruction to the existence of such a lift. When a Maslov potential exists and the Legendrian is connected, two different choices of Maslov potentials differ by the addition of an integer.

Finally, the grading of a generator $c \in CF_*((\Lambda_0,\varepsilon_0),(\Lambda_1,\varepsilon_1))$ is obtained in the following manner. Consider the path of Lagrangian planes given by rotating the Lagrangian plane $T_{\Pi(c)}\Pi(\Lambda_0) \subset T_{\Pi(c)}\overline{W}$ to $T_{\Pi{c}}\Pi(\Lambda_1) \subset T_{\Pi(c)}\overline{W}$ through the smallest possible positive K\"{a}hler angles. These rotations induce a continuous deformation of the Maslov potential of $\Lambda_0$ at the point $\Pi(c)$; denote by $\mu_0 \in \RR$ the new value. By construction, the deformed Maslov potential of $\Lambda_0$ at $\Pi(c)$ and the Maslov potential of $\Lambda_1$ at $\Pi(c)$ are now lifts to $\RR$ of the same point in $S^1$. The grading is the number $\mu_0-\mu_1 \in \ZZ$ which is integer by the last property.

\begin{lma}
\label{lma:Maslov}
\begin{enumerate}
\item Let $\phi^1 \colon W \times \RR \to W \times \RR$ be the time-one map of a compactly supported contact isotopy. For any choice of Maslov potential on the Legendrian $\Lambda$ there an induced Maslov potential on its image $\phi^1(\Lambda) \subset W \times \RR$ uniquely defined by the property that the Maslov potentials extend over the exact Lagrangian cobordism from $\Lambda$ to $\phi^1(\Lambda)$ induced by the isotopy.
\item If $\phi^1$ is a generic $C^1$-small contact isotopy, then the small chords of $\Lambda \cup \phi^1(\Lambda)$ are in bijective correspondence with the critical points of a $C^1$-small Morse function $f \colon \Lambda \to \RR$, and the above grading coincides with the Morse index.
\end{enumerate}
\end{lma}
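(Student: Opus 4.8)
The plan is to handle the two parts separately: Part (1) is a soft topological statement about extending sections of an affine $\RR$-bundle, while Part (2) is a local normal-form computation.

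For Part (1), I would use the standard suspension (or trace) of the compactly supported contact isotopy $\phi^t$: this is an exact Lagrangian cobordism $L_\phi$ in the symplectization $\RR_s\times(W\times\RR_z)$ running from $\Lambda$ at the negative end to $\phi^1(\Lambda)$ at the positive end, diffeomorphic to $\Lambda\times\RR$, with cylindrical ends modelled on the Legendrians. Since $W$ has vanishing first Chern class, the chosen trivialisation of $\det_\CC TW$ extends over the symplectization — the added symplectization and $z$-directions span a trivial complex line bundle — so Maslov potentials make sense for Lagrangians in the symplectization as lifts of the Gauss phase map to the associated affine $\RR$-bundle. Because the ends of $L_\phi$ are cylindrical, the Gauss phase of $L_\phi$ restricts at the two ends to the pullbacks of the Gauss phases of $\Lambda$ and of $\phi^1(\Lambda)$. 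The inclusion $\Lambda\hookrightarrow L_\phi$ of the negative end is a homotopy equivalence, so any Maslov potential on $\Lambda$ extends to one on $L_\phi$; two extensions agreeing on the negative end differ by a locally constant $\ZZ$-valued function vanishing there, hence on all of $L_\phi\cong\Lambda\times\RR$, so the extension is unique. Restricting it to the positive end produces the desired Maslov potential on $\phi^1(\Lambda)$.

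For Part (2), I would first invoke the Legendrian neighbourhood theorem to identify a neighbourhood of $\Lambda$ in $W\times\RR$ with a neighbourhood of the zero section in $J^1\Lambda=T^*\Lambda\times\RR_z$, arranging compatibility with the symplectic trivialisation so that Maslov potentials are transported. Since $\phi^1$ is $C^1$-small, $\phi^1(\Lambda)$ lies in this neighbourhood and equals the $1$-jet $j^1f$ of a $C^1$-small function $f\colon\Lambda\to\RR$, Morse for generic $\phi^1$; as the Reeb flow is $\partial_z$, the mixed chords between $j^10$ and $j^1f$ occur precisely over $\{df=0\}$, are short because $f$ is small, and are transversely cut out exactly at the nondegenerate critical points, giving the stated bijection with $\mathrm{Crit}(f)$. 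For the grading at a critical point $x$, I would pass to the linear model, where the Lagrangian projections become the zero section and the graph of the symmetric operator $\mathrm{Hess}_x f$ on $T_x\Lambda$; diagonalising the Hessian splits the computation over the eigenvalues, i.e.\ over one-dimensional factors $\CC=T^*\RR$. In each factor one rotates the horizontal line to the line of slope $\lambda_j$ through the prescribed K\"{a}hler angle, dragging the Maslov potential, and compares the resulting value $\mu_0$ with the potential $\mu_1$ of $\phi^1(\Lambda)$; here Part (1) is used to identify $\mu_1$ as the continuation potential, which for the $C^1$-small trace cobordism is $C^0$-close to the potential of $\Lambda$ and hence has its integer ambiguity pinned down. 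One finds that a negative eigenvalue contributes $1$ and a positive one contributes $0$, so the grading equals the number of negative eigenvalues, i.e.\ $\mathrm{ind}_x(f)$; this is consistent with the differential having degree $-1$ and with $CF((\Lambda,\varepsilon),(\Lambda',\varepsilon'))=C^{\mathrm{Morse}}(f)$.

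The soft arguments — Part (1) and the chord bookkeeping in Part (2) — are routine; the point requiring care is the \emph{normalisation} in the grading computation, namely checking that the continuation potential of Part (1) is exactly the one for which negative Hessian eigenvalues contribute to the index with no global shift, i.e.\ that the ``smallest positive K\"{a}hler angle'' convention is calibrated against the continuation convention. Alternatively, this grading statement can be read off the analogous computation already in the literature on linearised Legendrian contact homology, Part (2) then being a translation into the language of Maslov potentials.
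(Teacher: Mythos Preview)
Your proposal is correct and follows essentially the same approach as the paper: for Part (1) the paper likewise constructs the trace cobordism in the symplectisation (citing Chantraine) and extends the Maslov potential over the Lagrangian cylinder by elementary topology, and for Part (2) it invokes the Legendrian neighbourhood theorem to reduce to $j^1f\subset J^1\Lambda$ and declares the grading computation ``standard''. Your write-up supplies considerably more detail than the paper does---in particular the eigenvalue-by-eigenvalue check and the discussion of the normalisation between the continuation potential and the K\"{a}hler-angle convention---but the architecture is the same.
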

\begin{proof}
(1) The trace of the Legendrian isotopy can be made into a Lagrangian cylinder inside the symplectisation $$(\RR_t \times \overline{W} \times \RR_z,d(e^t\alpha_{st}))$$
with cylindrical ends over the initial and final Legendrian; see work \cite{LagrConc} by Chantraine. The Maslov potential of $\Lambda$ induces a Maslov potential on the negative end of this cobordism. This Maslov potential can be extended to the entire cobordism by elementary topology (it is a Lagrangian cylinder). The induced Maslov potential on the positive end is the sought Maslov potential on $\phi^1(\Lambda)$.

(2) This computation is standard, and can be performed in a small neighbourhood of $\Lambda$. Recall that any Legendrian $\Lambda$ has a standard neighbourhood which is contactomorphic to a neighbourhood of the zero section $j^10 \subset J^1\Lambda$, under which $\Lambda$ moreover is identified with $j^10$; see \cite{Geiges:Intro}. The perturbation can be assumed to be given by the one-jet $j^1f$ of some $C^1$-small smooth function $f \colon \Lambda \to \RR$ in the same neighbourhood.
\end{proof}

Note that the case when $\overline{W}$ is of dimension $\dim_{\RR}\overline{W}=2$ then $T\overline{W}$ always has a symplectic trivialisation.

In the case when $\overline{W}=T^*S^1$ there is a canonically defined trivialisation in which the zero-section has a constant field of non-zero tangent vectors. With this trivialisation the zero section obviously has a Maslov potential which moreover is constant (for a suitable trivialisation). The different symplectic trivialisations on $T^*S^1$ up to homotopy are in bijection with homotopy classes of maps $T^*S^1 \to S^1$ i.e.~cohomology classes in $H^1(T^*S^1)$. Note that there is a unique trivialisation for which the zero section has a non-vanishing Maslov class; for the remaining trivialisations the zero-section does not admit a Maslov potential.

\section{Examples that exhibit unbounded spectral norms}

The following auxiliary result facilitates our computations, and will be invoked repeatedly.
\begin{lma}
\label{lma:spectralnormcomp}
\begin{enumerate}
\item Let $\phi^t \colon \Lambda_0 \hookrightarrow \overline{W} \times \RR$ be a Legendrian isotopy of a closed Legendrian $\Lambda_0$ that admits a Maslov potential, and endow $\phi^1(\Lambda_0)$ with the Maslov potential induced from $\Lambda_0$ via the isotopy, as described in Part (1) of Lemma \ref{lma:Maslov}. Further assume that $\Lambda_0$ has no Reeb chords. If the complex $CF(\Lambda_0,\phi^1(\Lambda_0))$ has unique Reeb chord generators $c$ and $d$ in degrees $|d|=0$ and $|c|=\dim \Lambda_0$, then the spectral range satisfies
$$ \rho(CF(\Lambda_0,\phi^1(\Lambda_0))) \ge |\ell(c)-\ell(d)|.$$
(In fact, it is even true that the spectral range is \emph{equal} to $\ell(c)-\ell(d)$, where this quantity moreover is positive, but we will not show this.)
\item Assume that the complex $CF(\Lambda_0,\Lambda_1)$ is $\ZZ$-graded, acyclic, and has no generators in degrees $i+1$ or $i-2.$ If there are unique Reeb chords $c,d$ in the degrees $|c|=i$ and $|d|=i-1$, for some choice of symplectic trivialisation and Maslov potential, then the boundary depth satisfies the bound
$$\beta(CF(\Lambda_0,\Lambda_1)) \ge \ell(c)-\ell(d).$$
\end{enumerate}
\end{lma}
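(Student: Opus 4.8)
\emph{Part (1).} I would reduce this to the dictionary between the barcode and the Floer complex recalled in Section~\ref{sec:barcode} (in particular Corollary~\ref{cor:depth}), after first pinning down the total Floer homology. Since $\Lambda_0$ has no Reeb chords, for a generic $C^1$-small Legendrian perturbation $\Lambda_0'$ the complex $CF(\Lambda_0,\Lambda_0')$ is the Morse complex $C^{\mathrm{Morse}}(f;\kk)$ of a $C^1$-small Morse function $f\colon\Lambda_0\to\RR$, as recalled in Section~\ref{sec:floer}, and $\phi^1(\Lambda_0)$ is Legendrian isotopic to such a $\Lambda_0'$ (e.g.\ to $\phi^\varepsilon(\Lambda_0)$ for small $\varepsilon>0$). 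By invariance of Floer homology under Legendrian isotopy this gives $HF_*(\Lambda_0,\phi^1(\Lambda_0))\cong H_*(\Lambda_0;\kk)$; moreover, since we equip $\phi^1(\Lambda_0)$ with the Maslov potential induced from $\Lambda_0$ via the isotopy, and since by Part~(2) of Lemma~\ref{lma:Maslov} the grading on $C^{\mathrm{Morse}}(f;\kk)$ is the Morse index, this isomorphism preserves the grading; in particular $HF_0\neq 0$ and $HF_{\dim\Lambda_0}\neq 0$ (generators of $H_0$ and of $H_{\dim\Lambda_0}$; for the latter one uses $\Lambda_0$ closed connected orientable, or $\mathrm{char}\,\kk=2$). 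Now I would argue in $CF(\Lambda_0,\phi^1(\Lambda_0))$ directly: as $d$ is the only generator in degree $0$, the nonvanishing of $HF_0$ forces $\partial d=0$ and forces the degree-$0$ part of $\operatorname{im}\partial$ to vanish, so the class $[d]$ is nonzero and represented by $d$ alone; the same argument applied to $c$, the unique generator in degree $\dim\Lambda_0$, using $HF_{\dim\Lambda_0}\neq 0$, shows $[c]$ is nonzero and represented by $c$ alone. Hence the spectral invariants of $[d]$ and $[c]$ are $\mathfrak{a}(d)$ and $\mathfrak{a}(c)$; since $\dim\Lambda_0\geq 1$ these classes lie in distinct degrees and so give two distinct semi-infinite bars, whence $\rho(CF(\Lambda_0,\phi^1(\Lambda_0)))\geq|\mathfrak{a}(c)-\mathfrak{a}(d)|\geq|\ell(c)-\ell(d)|$, the last step because $\mathfrak{a}=\pm\ell$ on each chord and $\ell(c)+\ell(d)\geq|\ell(c)-\ell(d)|$.

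\emph{Part (2).} Here the complex is acyclic, so every bar of its barcode is finite, and I want to exhibit one finite bar of length $\mathfrak{a}(c)-\mathfrak{a}(d)$. Since there are no generators in degree $i+1$, the differential into $CF_i$ vanishes, so $HF_i=\ker(\partial\colon CF_i\to CF_{i-1})$, and acyclicity forces $\partial$ to be injective on $CF_i$, hence $\partial c\neq 0$. As $d$ is the unique generator in degree $i-1$, this gives $\partial c=r\,d$ with $r\in\kk^{*}$ (and $\partial d=0$, from $\partial^2=0$ or from the absence of generators in degree $i-2$). The Reeb-chord basis is compatible with the action filtration (Section~\ref{sec:floer}), so the converse part of Corollary~\ref{cor:depth} shows the barcode contains the finite bar $[\mathfrak{a}(d),\mathfrak{a}(c))$, of positive length $\mathfrak{a}(c)-\mathfrak{a}(d)$ since $\partial$ strictly decreases action. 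A short case check on $\mathfrak{a}(c)=\pm\ell(c)$ and $\mathfrak{a}(d)=\pm\ell(d)$, using $\ell\geq 0$ and $\mathfrak{a}(c)-\mathfrak{a}(d)>0$, gives $\mathfrak{a}(c)-\mathfrak{a}(d)\geq\ell(c)-\ell(d)$, whence $\beta(CF(\Lambda_0,\Lambda_1))\geq\ell(c)-\ell(d)$.

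\emph{Main obstacle.} The only step carrying genuine content is the first part of Part~(1): identifying the \emph{graded} Floer homology $HF_*(\Lambda_0,\phi^1(\Lambda_0))$ with the singular homology $H_*(\Lambda_0;\kk)$, by chaining together invariance under Legendrian isotopy, the Morse model that is available precisely because $\Lambda_0$ carries no Reeb chords, and the grading comparison of Lemma~\ref{lma:Maslov}(2); this is exactly the ingredient that fails for general Legendrians, as explained in Section~\ref{sec:why}. Everything else is elementary linear algebra in the Floer complex together with the barcode dictionary, plus the routine bookkeeping of matching the action $\mathfrak{a}$ with $\pm\ell$.
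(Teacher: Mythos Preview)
Your proposal is correct and follows essentially the same approach as the paper: in both parts you use the degree constraints together with the homology computation (invariance plus the Morse model for Part~(1), acyclicity for Part~(2)) to force $c$ and $d$ to be, respectively, nontrivial cycles or related by $\partial c=r d$, and then read off the bars via the barcode dictionary of Corollary~\ref{cor:depth}. You are in fact more careful than the paper on two points it glosses over: making the graded identification $HF_*\cong H_*(\Lambda_0;\kk)$ explicit via Lemma~\ref{lma:Maslov}(2), and tracking the sign discrepancy between $\mathfrak a=\pm\ell$ and $\ell$ to pass from $|\mathfrak a(c)-\mathfrak a(d)|$ (resp.\ $\mathfrak a(c)-\mathfrak a(d)$) to $|\ell(c)-\ell(d)|$ (resp.\ $\ell(c)-\ell(d)$).
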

\begin{proof}
(1): This follows from invariance properties of the Floer homology. Note that the homology of $CF(\Lambda_0,\Lambda_0)$ has unique generators in degrees 0 and $\dim\Lambda$ which represent the point class and fundamental class in Morse homology. It follows by degree reasons that the Reeb chord generators $c$ and $d$ must both be cycles which are not boundaries. The two corresponding semi-infinite bars in the barcode have endpoints that are separated by precisely $|\ell(c)-\ell(d)|$ as sought.

(2): Acyclicity together with the degree assumptions implies that $\partial c=d$. The statement then follows by the second part of Corollary \ref{cor:depth} since the Reeb chords form a compatible basis.
\end{proof}

\subsection{Legendrian isotopy of the unknot (Proof of Part (2) of Theorem \ref{mainthm})}
\label{sec:unknot}

Consider the contact manifold $J^1\RR=\RR_q \times \RR_p \times \RR_z$ with coordinates $q,p,z$ and contact form $dz-p\,dq$. Under the quotient $\RR_q \to \RR/\ZZ=S^1$ we obtain the angular coordinate $\theta$ induced by $\theta=q \mod 1$. In other words, the aforementioned contact manifold $J^1\RR$ is the universal cover of the contact manifold $J^1S^1=S^1 \times \RR_p \times \RR_z$ equipped with the standard contact form $dz-p\,d\theta$.

First consider the standard Legendrian unknot $\Lambda_0 \subset J^1S^1$ with front projection as shown in Figure \ref{fig:u1}, which thus is contained inside the subset $J^1[-1/2,1/2] \subset J^1S^1$. The $p$-coordinate of this particular representative can be seen to be estimated in terms of the ratio of $a$ and $b$, which yields $$\Lambda_0 \subset \{|p| \le 2a/b\}.$$

Recall the well-known fact that $\Lambda_0$ has vanishing Maslov class and hence admits a Maslov potential. Further, this Legendrian has a unique transverse Reeb chord and its Chekanov--Eliashberg algebra is equal to the polynomial algebra in one variable of degree $1$ with no differential (either for $\kk=\ZZ_2$ or for arbitrary $\kk$ and the choice of bounding spin structure); see \cite{Etnyre:LegendrianContact}. In particular, its Chekanov--Eliashberg algebra admits the trivial augmentation.

We also fix a Legendrian fibre
$$F=F_{(1/4,0)}=\{(1/4) \times \RR_p \times \{0\}\} \subset J^1[-1/2,1/2] \subset J^1S^1.$$
Note that the Reeb chords between any Legendrian $\Lambda$ and $F$ are in bijective correspondence with the intersection points of $\Lambda$ and the hypersurface $\{\theta=1/4\}$.

Since $F$ that has no Reeb chords, its Chekanov--Eliashberg algebra trivially admits an augmentation. We can thus define the Floer homology complex $CF(\Lambda_0,F)$ which is generated by two Reeb chords $c$ and $d$, where $0>\ell(c)>\ell(d)$ and $|c|=|d|+1$. Note that $CF(\Lambda_0,F)$ is an acyclic complex by invariance under Legendrian isotopy; after shrinking the unknot sufficiently, all mixed chords disappear.

The goal is to construct a Legendrian isotopy $\Lambda_t \subset J^1S^1$ of the unknot confined to the subset
$$\{|p| \le 2a/b\} \subset J^1 S^1$$
for which the boundary depth of $CF(\Lambda_T,F)$ becomes arbitrarily large as $t \to +\infty$. This isotopy will be constructed as the projection of an isotopy $\tilde{\Lambda}_t \subset J^1\RR$ of the unknot inside the universal cover $J^1\RR \to J^1S^1$. In fact, the Legendrian isotopy $\tilde{\Lambda}_t$ is very simple; it is the rescaling of
$$\tilde{\Lambda}_0=\Lambda_0 \subset J^1[-1/2,1/2] \subset J^1\RR$$
under the map $(q,p,z) \mapsto (e^t\cdot q,p,e^t\cdot z)$.

It is easy to check that $CF(\tilde{\Lambda}_t,F)$ satisfies the property that the boundary depth goes to $+\infty$ as $t \to +\infty$. Indeed, these complexes are generated by the two unique transversely cut out Reeb chords $c_t$ and $d_t$ between $\tilde{\Lambda}_t$ and $F$ for all values $t>0$. These chords moreover satisfy the property that  $\ell(c_t) - \ell(d_t)$ becomes arbitrarily large as $t \to +\infty$; c.f.~Part (2) of Lemma \ref{lma:spectralnormcomp}. 

What remains to prove is the following two claims for the projection $\Lambda_t \subset J^1S^1$ of the Legendrian rescaling $\tilde{\Lambda}_t \subset J^1\RR$. First, we claim that $\Lambda_t$ indeed is a Legendrian isotopy. Second, we show that the boundary depth of $CF(\Lambda_t,F)$ goes to $+\infty$ as $t \to +\infty$

The fact that $\Lambda_t$ is a Legendrian isotopy can be seen by considering the sequence of front projections; see Figures \ref{fig:u2} and \ref{fig:u3}. Except for an isotopy of the front, the front also undergoes a sequence \emph{RIII}-moves together with the composite move shown in Figure \ref{fig:r}.

Then we need to estimate the boundary depth of the sequence of Floer complexes $CF(\Lambda_t,F)$. In addition to Reeb chords $c_t$ and $d_t$, which correspond to the Reeb mixed Reeb chords on the lift and have exactly the same actions, there are additional Reeb chords between $\Lambda_t$ and $F$ that appear as $t \to +\infty$. Nevertheless, we claim that the boundary depth of $CF(\Lambda_t,F)$ still is bounded from below by the boundary depth $\beta(CF(\tilde{\Lambda}_t,F)).$

To see the last claim, we will consider different gradings of the complexes $CF(\Lambda_t,F)$, obtained by changing the symplectic trivialisation of $T^*S^1$. Note that $\Lambda_0$ is null-homotopic inside $J^1S^1$ and thus has a vanishing Maslov class independently of the choice of symplectic trivialisation. Moreover, the chords $c_t$ and $d_t$ always satisfy $|c_t|-|d_t|=1$ regardless of the choice of Maslov potential and symplectic trivialisation.

We claim that, after changing the symplectic trivialisation of $T^*S^1$ by introducing a sufficiently large number $N \gg 0$ of rotations of the standard symplectic frame as one traverses $\theta=1/2$, all generators $c'$ in the complex except $c_t$ and $d_t$ acquire degrees that satisfy
$$|c'|- |c_t| \notin [-10,10].$$
Since these degree properties can be achieved, the statement now follows directly by Part (2) of Lemma \ref{lma:spectralnormcomp}.\qed

\begin{figure}[htp]
	\vspace{3mm}
	\labellist
	\pinlabel $a$ at -5 53
	\pinlabel $b$ at 68 -7
	\pinlabel $\color{blue}\Lambda_0$ at 55 63
	\pinlabel $\color{red}d_0$ at 78 58
	\pinlabel $\color{red}c_0$ at 89 33
	\pinlabel $F$ at 77 32
	\pinlabel $z$ at 68 101
	\pinlabel $\theta$ at 135 38
	\pinlabel $\frac{1}{2}$ at 122 29
	\pinlabel $-\frac{1}{2}$ at 10 29
	\endlabellist
	\includegraphics{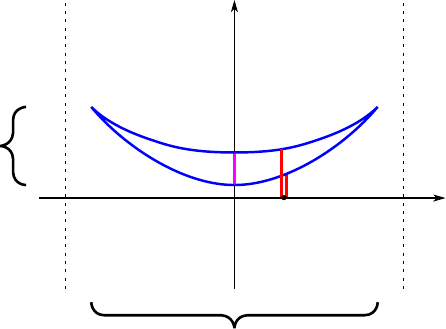}
	\vspace{3mm}
	\caption{The standard Legendrian unknot $\Lambda_0$ and the Legendrian fibre $F$. Note that there are precisely two transverse Reeb chords $c_0,d_0$ between $F$ and $\Lambda_0$.}
	\label{fig:u1}
\end{figure}
\begin{figure}[htp]
	\vspace{3mm}
	\labellist
	\pinlabel $\color{blue}\Lambda_2$ at 55 73
	\pinlabel $\color{blue}\tilde{\Lambda}_2$ at 55 170
	\pinlabel $\color{red}d_2$ at 97 42
	\pinlabel $\color{red}c_2$ at 110 22
	\pinlabel $\color{red}d_2$ at 97 150
	\pinlabel $\color{red}c_2$ at 110 131
	\pinlabel $z$ at 87 89
	\pinlabel $z$ at 87 198
	\pinlabel $\theta$ at 153 26
	\pinlabel $q$ at 183 135
	\pinlabel $\frac{1}{2}$ at 140 144
	\pinlabel $\frac{1}{2}$ at 34 144
	\pinlabel $\frac{1}{2}$ at 140 34
	\pinlabel $\frac{1}{2}$ at 34 34
	\endlabellist
	\includegraphics{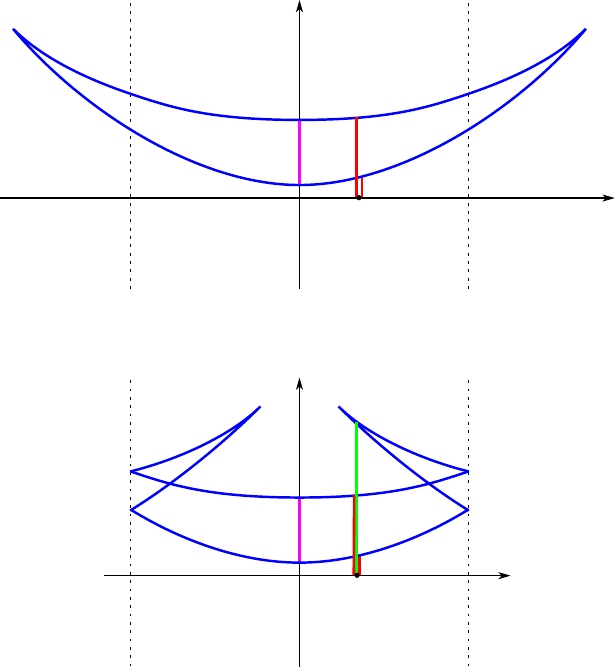}
	\caption{Above: $\tilde{\Lambda}_2$ has a front which is a linear rescaling of the front of $\Lambda_0$ inside $J^1\RR$. Below: $\Lambda_2$ is the projection of $\tilde{\Lambda}_2$ inside $J^1S^1$. Except for the mixed chords $c_t$ and $d_t$ that exist for the lift, there are now additional mixed chords.}
	\label{fig:u2}
\end{figure}
\begin{figure}[htp]
	\vspace{3mm}
	\labellist
	\pinlabel $\color{blue}\Lambda_t$ at 40 87
	\pinlabel $\color{red}d_t$ at 67 42
	\pinlabel $\color{red}c_t$ at 80 22
	\pinlabel $z$ at 57 108
	\pinlabel $\theta$ at 123 26
	\pinlabel $\frac{1}{2}$ at 110 34
	\pinlabel $\frac{1}{2}$ at 4 34
	\endlabellist
	\includegraphics{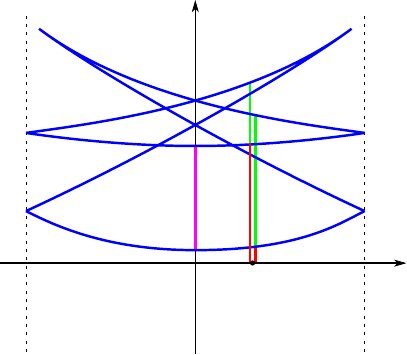}
	\caption{This shows the projection $\Lambda_t$ of the rescaling $\tilde{\Lambda}_t$ under the universal cover $J^1\RR \to J^1S^1$.}
	\label{fig:u3}
\end{figure}

\subsection{Legendrian isotopy of the zero-section (Proof of Part (1) of Theorem \ref{mainthm})}
We use the same coordinates as in the above Section \ref{sec:unknot}. In fact, the sought Legendrian isotopy is also constructed in a manner similar to the construction of $\Lambda_t$ given there, by performing a rescaling of a part of the front inside the universal cover $J^1\RR$ (and then projecting back to $J^1S^1$). The isotopy is shown in Figures \ref{fig:d1} and \ref{fig:d4}. One starts by considering a Legendrian perturbation $j^1f$ of $j^10$ which has precisely two chords. Then one performs a \emph{RII}-move. Rescaling the front of the Legendrian introduced by the \emph{RII}-move in the universal cover $\RR^2$ and then projecting back to $S^1 \times \RR$ is again a Legendrian isotopy. In Figure \ref{fig:d4} one sees that there are exactly two chords between $j^10$ and the produced Legendrians, while the difference in action between these two generators grows indefinitely as $t \to +\infty$. \qed

\begin{figure}[htp]
	\vspace{3mm}
	\labellist
	\pinlabel $z$ at 56 90
	\pinlabel $\theta$ at 122 27
	\pinlabel $z$ at 203 90
	\pinlabel $\theta$ at 269 27
	\pinlabel $\frac{1}{2}$ at 110 35
	\pinlabel $-\frac{1}{2}$ at -1 35
	\pinlabel $\frac{1}{2}$ at 257 35
	\pinlabel $-\frac{1}{2}$ at 146 35
	\pinlabel $\color{red}d$ at 3 17
	\pinlabel $\color{red}d$ at 101 17
	\pinlabel $\color{red}c$ at 52 32
	\pinlabel $\color{red}d$ at 150 17
	\pinlabel $\color{red}d$ at 248 17
	\pinlabel $\color{red}c_0$ at 197 40
	\pinlabel $\color{blue}\Lambda$ at 35 7
	\pinlabel $\color{blue}\Lambda_0$ at 185 7
	\endlabellist
	\includegraphics{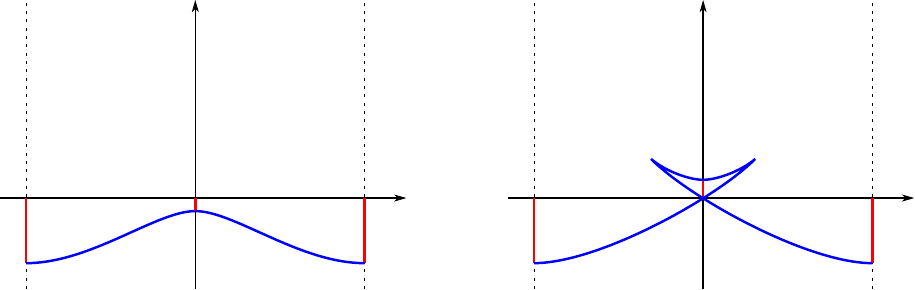}
	\caption{Left: A Legendrian perturbation of the zero section. The vertical chords denote the two Reeb chords between the zero-section $j^10$ and the perturbation. Right: The perturbed version of the zero-section after a suitable Legendrian \emph{RI}-move.}
	\label{fig:d1}
\end{figure}

\begin{figure}[htp]
	\vspace{3mm}
	\labellist
	\pinlabel $\frac{1}{2}$ at 166 170
	\pinlabel $\frac{2}{2}$ at 215 170
	\pinlabel $-\frac{2}{2}$ at 6 170
	\pinlabel $-\frac{1}{2}$ at 56 170
	\pinlabel $z$ at 113 243
	\pinlabel $z$ at 113 115
	\pinlabel $\theta$ at 178 26
	\pinlabel $q$ at 235 161
	\pinlabel $\frac{1}{2}$ at 166 34
	\pinlabel $-\frac{1}{2}$ at 56 34
	\pinlabel $\color{red}d$ at 157 153
	\pinlabel $\color{red}d$ at 59 153
	\pinlabel $\color{red}c_t$ at 107 182
	\pinlabel $\color{red}d$ at 157 17
	\pinlabel $\color{red}d$ at 59 17
	\pinlabel $\color{red}c_t$ at 107 46
	\pinlabel $\color{blue}\Lambda_t$ at 100 10
	\endlabellist
	\includegraphics{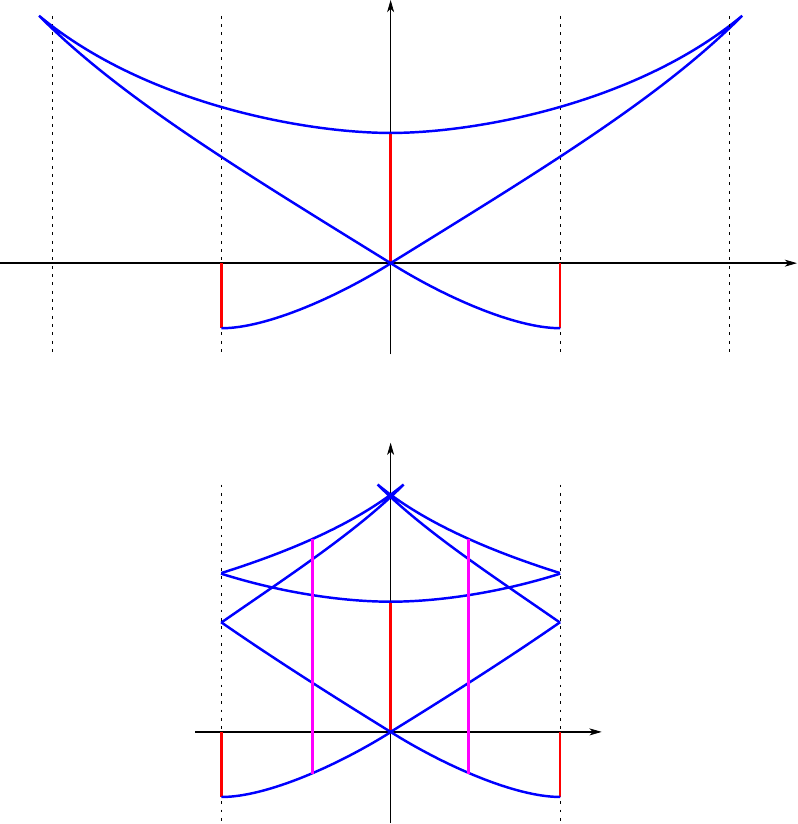}
	\caption{$\Lambda_t$ is obtained from $\Lambda_0$ by a linear rescaling of the front inside $\{ z \ge 0\}$ in the universal cover $J^1\RR^2$ followed by the canonical projection $J^1\RR \to J^1S^1.$ The front of $\Lambda_t$ undergoes the composite move shown in Figure \ref{fig:r} consisting of two consecutive \emph{RII}-moves along with \emph{RIII}-moves.}
	\label{fig:d4}
\end{figure}

\subsection{Hamiltonian isotopy on the punctured torus (Proof of Theorem \ref{mainthm:torus})}

We consider the exact Lagrangian embedding $L \subset (\Sigma_{1,1},d\lambda)$ of $S^1$ which is given as the image of $\{p=0\} \subset \RR^2$ under the quotient construction in Section \ref{sec:torus}; see Figure \ref{fig:t1}. We perform a Hamiltonian perturbation $L'$ that intersects the original Lagrangian transversely in precisely two points $c$ and $d$. The spectral norm is thus $\gamma(CF(L,L'))=\ell(c)-\ell(d)$.

Then consider the autonomous Hamiltonian
$$\rho \colon \Sigma_{1,1} \to \RR_{\ge 0}$$
with support inside $\{q \in [-\delta,\delta]\}$ for some small $\delta>0$, and which is equal to the smooth bump-function $\rho(q) \ge 0$ in one variable of the form
\begin{itemize}
\item $\rho(q)\equiv 1$ in a neighbourhood of $q=0$;
\item $\rho(q)=\rho(-q)$;
\item and $\rho'(q) \ge 0$ for $q<0$.
\end{itemize}
The Hamiltonian isotopy $\phi^t_{\rho}$ wraps the region $q \in (-\delta,0)$ in the negative $p$-direction, while it wraps the region $q \in (0,\delta)$ in the positive $p$-direction.

We claim that $CF(L,\phi^t_\rho(L'))$ has a spectral norm which becomes arbitrarily large as $t \to +\infty$. What is clear is that $\ell(c)-\ell(d) \to +\infty$ as $t \to +\infty$. (Use e.g.~Lemma \ref{lma:lift}.) Again there are additional generators that appear as $t \to +\infty$, so knowing that $\ell(c)-\ell(d) \to +\infty$ is not sufficient.

As in Section \ref{sec:unknot} a change of symplectic trivialisation can again give us what we need. First consider the canonical symplectic trivialisation, induced by the trivialisation of $\RR^2$ and the quotient projection. Then deform this trivialisation by making a number $N \gg 0$ of full rotations of the standard symplectic frame (relative the constant one) as one traverses the cycle $\{p=1\}$. Note that the Lagrangian corresponding to $\{p=0\}$ still has a Maslov potential after this change of trivialisation. Again it is readily seen that all generators $c'$ except $c$ and $d$ satisfy the property
$$|c'|-|c| \notin [-10,10]$$
after choosing $N \gg 0$ sufficiently large, while $|c|-|d|=1$ always is satisfied.

The spectral norm can finally be computed by invoking Part (1) of Lemma \ref{lma:spectralnormcomp}.

\begin{figure}[htp]
	\vspace{3mm}
	\labellist
	\pinlabel $q$ at 182 86
	\pinlabel $p$ at 87 184
	\pinlabel $\color{red}c$ at 91 80
	\pinlabel $\color{red}d$ at 148 81
	\pinlabel $L$ at 122 80
	\pinlabel $\color{blue}L'$ at 122 102
	\pinlabel $\Sigma_{1,1}$ at 295 25
	\pinlabel $\color{blue}L'$ at 344 80
	\pinlabel $L$ at 305 70
	\endlabellist
	\includegraphics{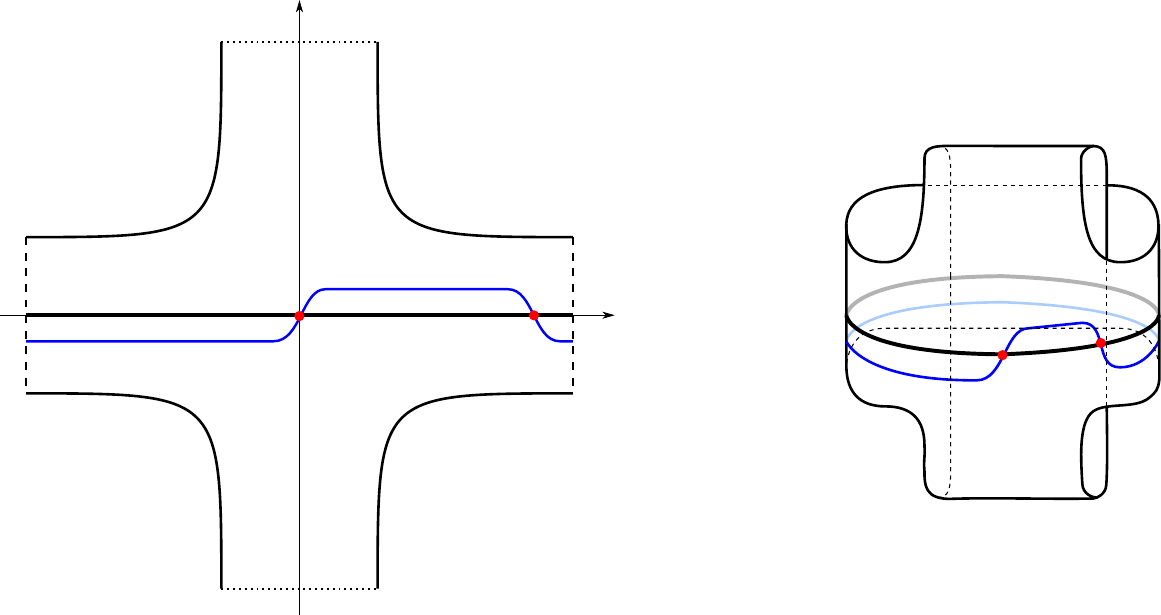}
	\caption{The left depicts a domain in $\RR^2$ with piecewise smooth boundary. After identifying the two horizontal pieces of the boundary, as well as the two vertical pieces, one obtains the Liouville domain shown on the right, with Liouville form described in Section \ref{sec:torus}. The closed exact Lagrangian $L$ is the image of $\{p=0\}$ and $L'$ is a small Hamiltonian perturbation of $L$.}
	\label{fig:t1}
\end{figure}

\begin{figure}[htp]
	\vspace{3mm}
	\labellist
	\pinlabel $q$ at 384 86
	\pinlabel $p$ at 289 184
	\pinlabel $q$ at 182 86
	\pinlabel $p$ at 87 184
	\pinlabel $\color{red}c$ at 91 80
	\pinlabel $\color{red}d$ at 148 81
	\pinlabel $L$ at 122 80
	\pinlabel $L$ at 333 80
	\pinlabel $\color{blue}\phi^t_{\rho}(L')$ at 130 102
	\pinlabel $\color{blue}\phi^{t'}_{\rho}(L')$ at 330 102
	\endlabellist
	\includegraphics{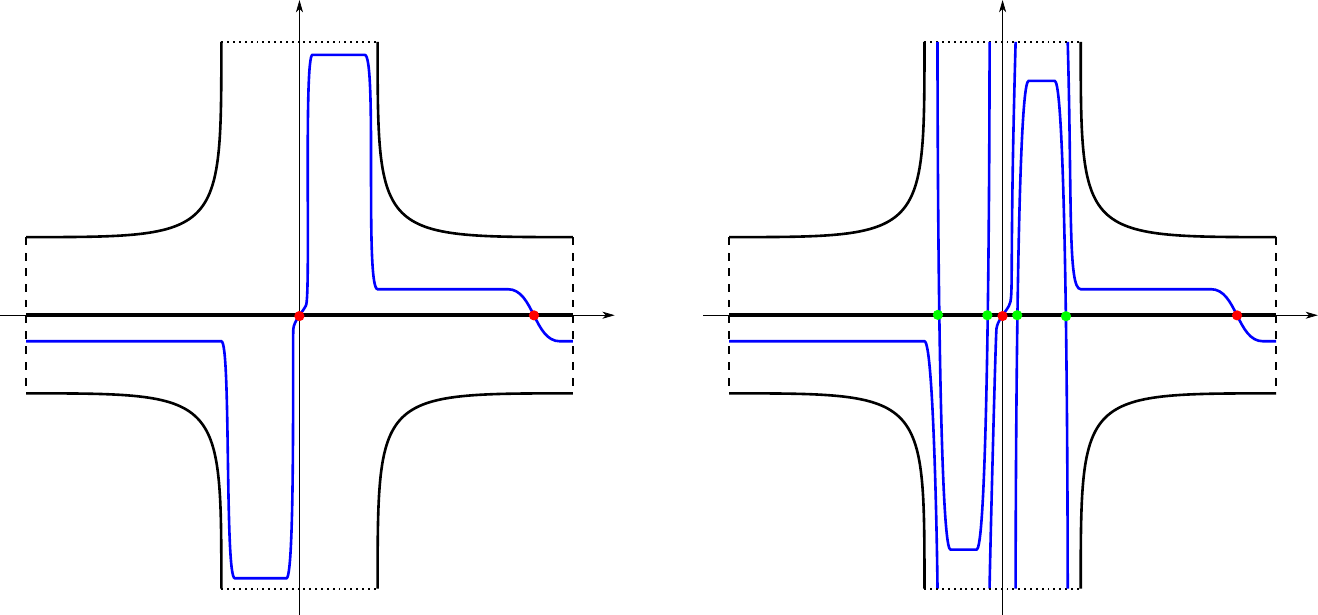}
	\caption{A Hamiltonian isotopy that wraps the Lagrangian $\{p=0\}$ around the one-handle with core $\{q=0\}$, while fixing a neighbourhood of the latter core. Note that the Hamiltonian function is positive but constant near $q=0$. Here $t'>t$.}
	\label{fig:t2}
\end{figure}

\section{Proof of Theorem \ref{thm:ambientsurgery}}

By definition, our two Floer complexes are the linearised Legendrian contact homology complexes generated as a $\kk$-vector space by the mixed Reeb chords on the Legendrian link
$$ \Lambda_\pm \cup \phi^T_{\partial_z}(\Lambda).$$
Here $T \gg 0$ is fixed but sufficiently large.

The cusp-connected sum performed on $\Lambda_- \cup \phi^T_{\partial_z}(\Lambda)$ produces $\Lambda_+ \cup \phi^T_{\partial_z}(\Lambda)$ (of course, only the first component is affected). There is an associated exact standard Lagrangian handle-attachment cobordism 
$$ \mathcal{L} \subset (\RR_t \times \overline{W} \times \RR_z,d(e^t\alpha_{st})) $$
inside the symplectisation as constructed in \cite{Dimitroglou:Ambient}. This is a cobordism with cylindrical ends from
$$\Lambda_- \cup \phi^T_{\partial_z}(\Lambda) \:\:\:\text{to}\:\:\:\Lambda_+ \cup \phi^T_{\partial_z}(\Lambda),$$
i.e.~from the Legendrian link before surgery (at the concave end) to the link after surgery (at the convex end). One component of this cobordism is simply the trivial cylinder $\RR \times \phi^T_{\partial_z}(\Lambda))$. This Lagrangian cobordism induces a unital DGA-morphism
$$\Phi_{\mathcal{L}} \colon \mathcal{A}(\Lambda_+ \cup \phi^T_{\partial_z}(\Lambda)) \to \mathcal{A}(\Lambda_- \cup \phi^T_{\partial_z}(\Lambda))$$
of the Chekanov--Eliashberg algebras. In particular, the choice of augmentation $\varepsilon_-$ of the Chekanov--Eliashberg algebra of $\Lambda_-$ pulls back to an augmentation $\varepsilon_+=\varepsilon_-\circ\Phi_{\mathcal{L}}$ of the Chekanov--Eliashberg algebra of $\Lambda_+$.

The DGA morphism $\Phi_{\mathcal{L}}$ of the Chekanov--Eliashberg algebras after and before the surgery  was computed in \cite[Theorem 1.1]{Dimitroglou:Ambient} under the assumption that the handle-attachment is sufficiently small. This computations in particular shows that the mixed chords $c$ on $\Lambda_+ \cup \phi^T_{\partial_z}(\Lambda)$ are mapped to
$$\Phi_{\mathcal{L}}(c)=c+\sum_i r_i\mathbf{d}_i,\:\: r_i \in \kk,$$
where $\mathbf{d}_i$ are words of Reeb chords that each contain an \emph{odd} number of mixed chords of $\Lambda_- \cup \phi^T_{\partial z}(\Lambda)$, and in which every mixed chord moreover is of length strictly less than $\ell(c)$. It now follows by pure algebraic considerations that the map
$$ CF_*((\Lambda_+,\varepsilon_+),(\Lambda,\varepsilon)) \to CF_*((\Lambda_-,\varepsilon_-),(\Lambda,\varepsilon))$$
induced by linearising the DGA-morphism $\Phi_{\mathcal{L}}$ using the augmentations $\varepsilon$ and $\varepsilon_-$ (see \cite{Bilinearised} and \cite{Dimitroglou:Cthulhu}) is an action-preserving isomorphism of the Floer complexes as claimed.
\qed

\bibliographystyle{alphanum}
\bibliography{references}

\end{document}